\newcommand{\tnorm}{\@ifstar\@tnorms\@tnorm}
\newcommand{\@tnorms}[1]{%
  \left|\mkern-1.5mu\left|\mkern-1.5mu\left|
   #1
  \right|\mkern-1.5mu\right|\mkern-1.5mu\right|
}
\newcommand{\@tnorm}[2][]{%
  \mathopen{#1|\mkern-1.5mu#1|\mkern-1.5mu#1|}
  #2
  \mathclose{#1|\mkern-1.5mu#1|\mkern-1.5mu#1|}
}
\newcommand{\jump}[1]{\llbracket #1 \rrbracket}
\title{Analysis of a hybridized/interface stabilized finite element
  method for the Stokes equations}
\author{Sander Rhebergen\thanks{Department of Applied Mathematics,
    University of Waterloo, Canada (\url{srheberg@uwaterloo.ca}) }
  \and Garth~N.~Wells\thanks{Department of Engineering, University of
    Cambridge, United Kingdom (\url{gnw20@cam.ac.uk})}}
\begin{document}
\maketitle
\begin{abstract}
  Stability and error analysis of a hybridized discontinuous Galerkin
  finite element method for Stokes equations is presented.  The method
  is locally conservative, and for particular choices of spaces the
  velocity field is point-wise solenoidal.  It is shown that the
  method is inf-sup stable for both equal-order and locally
  Taylor--Hood type spaces, and \emph{a priori} error estimates are
  developed.  The considered method can be constructed to have the
  same global algebraic structure as a conforming Galerkin method,
  unlike standard discontinuous Galerkin methods that have greater
  number of degrees of freedom than conforming Galerkin methods on a
  given mesh.  We assert that this method is amongst the simplest and
  most flexible finite element approaches for Stokes flow that provide
  local mass conservation. With this contribution the mathematical
  basis is established, and this supports the performance of the
  method that has been observed experimentally in other works.
\end{abstract}
\begin{keywords}
Stokes equations, hybridized, discontinuous Galerkin, finite element
methods.
\end{keywords}
\begin{AMS}
65N12, 65M15, 65N30, 76D07.
\end{AMS}
\section{Introduction}
\label{sec:introduction}

We present analysis of a type of hybridized discontinuous Galerkin
method for the Stokes equations.  These methods can be constructed to
have properties usually associated with discontinuous Galerkin finite
element methods, while retaining the attractive features of continuous
finite element methods, such as reduced discrete problem size.  The
analysis includes the method known as the Interface Stabilized Finite
Element Method (IS-FEM)~\cite{labeur:2007, Labeur:2012,
  Wells:2011} or Embedded Discontinuous Galerkin (EDG)
method~\cite{Cockburn:2009} in the literature.  The interface
stabilized formulation for the incompressible Navier--Stokes equations
with continuous pressure fields was presented in \cite{labeur:2007},
and generalised in \cite{Labeur:2012} for discontinuous pressure
fields.  The formulation is closely related to that of the
hybridizable discontinuous Galerkin method using interior penalty
numerical traces, the so-called IP-H~methods~\cite{Cockburn:2009a}.

The formulation we consider has been shown previously to have a number
of appealing properties when applied to the Stokes
equations~\cite{labeur:2007, Labeur:2012}. These include local mass
conservation (point-wise in cases), experimentally observed optimal
rates of convergence, they can be constructed to have an algorithmic
structure that is identical to a conforming finite element method, and
natural incorporation of stabilizing numerical fluxes when including
advective transport.

Other hybridized discontinuous Galerkin methods for the Stokes
equations have been developed, for example in~\cite{Cockburn:2011,
  Cockburn:2014b}. The method discussed in this work differs from
those in the aforementioned references in the following two
respects. Firstly, the methods considered in~\cite{Cockburn:2011,
  Cockburn:2014b} solve the Stokes equations in the
gradient-velocity-pressure format, resulting in so-called LDG-H type
methods, whereas we consider here the Stokes equations in the
velocity-pressure format. The velocity field resulting from the
formulations in \cite{Cockburn:2011, Cockburn:2014b} is not point-wise
solenoidal, but it is possible to devise post-processing operators for
LDG-H methods that generate approximate velocity fields that are
exactly divergence-free and $H({\rm div})$-conforming. Furthermore, it
can be shown that the post-processed velocity fields converge with
order $k+2$ when using polynomial approximations of order~$k$. The
second difference is that in~\cite{Cockburn:2011, Cockburn:2014b} the
velocity traces are the only global unknowns. Both velocity and
pressure traces are the global unknowns in our formulation (although
pressure traces can be eliminated in some cases).

Analysis of the framework considered in this work, but applied to the
scalar advection-diffusion equation, was presented
in~\cite{Wells:2011}.  Here, we address the Stokes problem and
underpin previous numerical investigations~\cite{Labeur:2012} by
proving stability of the formulation for the Stokes problem and
providing \emph{a priori} error estimates.  A particular motivation
for this work is an observation that block preconditioners, with
multigrid preconditioners applied to the blocks, can be very effective
for the IS-FEM/EDG type formulation.

The remainder of this work is structured as follows. In
\cref{sec:stokes} we present the Stokes equations, followed in
\cref{s:isfem} by the definition of the method we analyse with a
summary of its key features. Stability and boundedness of the method
are shown in \cref{s:conStabBound}, and error estimates are presented
in \cref{s:errorAnalysis}. Conclusions are drawn in
\cref{sec:conclusions}.

\section{The Stokes problem}
\label{sec:stokes}

Let $\Omega\subset\mathbb{R}^d$ be a polygonal ($d = 2$) or polyhedral
($d = 3$) domain, with the boundary of $\Omega$ denoted
by~$\Gamma$. We consider the Stokes problem of finding the velocity
field $u: \Omega \to \mathbb{R}^d$ and the pressure $p: \Omega \to
\mathbb{R}$ such that
\begin{subequations}
  \begin{align}
    \label{eq:stokes_a}
    -\Delta u + \nabla p &= f & & \mbox{in} \ \Omega,
    \\
    \label{eq:stokes_b}
    \nabla\cdot u &= 0 & & \mbox{in}\ \Omega,
    \\
    \label{eq:stokes_c}
    u &= 0 & & \mbox{on}\ \Gamma,
    \\
    \label{eq:stokes_d}
    \int_{\Omega} p \dif x &= 0,
  \end{align}
  \label{eq:stokes}
\end{subequations}
where $f : \Omega \to \mathbb{R}^d$ is the prescribed body force.

For a body force $f \in [L^2(\Omega)]^d$, the weak formulation of the
Stokes problem is given by: find $(u, p) \in [H_0^1(\Omega)]^d \times
L_{0}^{2}(\Omega)$ such that
\begin{subequations}
  \label{eq:stokes_wf_continuous}
  \begin{align}
    a(u,v) + b(v,p) &= \int_{\Omega} f \cdot v \dif x & &
    \forall v\in [H_0^1(\Omega)]^{d},
    \\
    -b(u,q) &=  0 & & \forall q\in L_0^2(\Omega),
  \end{align}
\end{subequations}
where $L^{2}_{0}(\Omega)$ is the space of $L^{2}$ functions on
$\Omega$ with zero mean, and the forms $a$ and $b$ are defined as
\begin{align}
  a(u, v) &:= \int_{\Omega}\nabla u :\nabla v \dif x
  \\
  b(p, v) &:= -\int_{\Omega}p \nabla \cdot v \dif x.
\end{align}

Given that the form $a$ is coercive on $[H_0^1(\Omega)]^{d}$, the
inf-sup condition on $b$ for the Stokes
problem~\cref{eq:stokes_wf_continuous} to be well-posed
is~\cite[Section~4.2.2]{Boffi:book}:
\begin{equation}
  \label{eq:infsupcondition_continuous}
  \beta_c \norm{q}_{0, \Omega} \le \sup_{w \in [H_0^1(\Omega)]^2}
  \frac{b(q, w)}{\norm{w}_{1,\Omega}} \qquad
  \forall q\in L_{0}^{2}(\Omega),
\end{equation}
where $\beta_c > 0$ is a constant depending only on~$\Omega$.

\section{Hybridized discontinuous Galerkin method}
\label{s:isfem}

The method that will be analysed is presented in this section, along
with some of its key conservation properties.

\subsection{Preliminaries}

Let $\mathcal{T} := \{K\}$ be a triangulation of the domain $\Omega$
into non-overlapping cells~$K$.  The characteristic length of a cell
$K$ is denoted by~$h_{K}$.  On the boundary of a cell, $\partial K$,
we denote the outward unit normal vector by~$n$. An interior facet $F$
is shared by two adjacent cells $K^+$ and $K^-$, $F :=
\overline{\partial K^+} \cap \overline{\partial K^-}$ and a boundary
facet is a facet of $\overline{\partial K}$ that lies on $\Gamma$.
The set of all facets is denoted by $\mathcal{F} = \{ F \}$, and the
union of all facets is denoted by~$\Gamma^0$.

We will work with the following finite element function spaces
on~$\Omega$:
\begin{equation}
  \begin{split}
    V_h  &= \cbr{v_h\in \sbr{L^2(\Omega)}^d
        : \ v_h \in \sbr{P_k(K)}^d, \ \forall\ K\in\mathcal{T}},
    \\
    Q_h &= \cbr{q_h\in L^2(\Omega) : \ q_h \in P_m(K) ,\
    \forall \ K \in \mathcal{T}},
  \end{split}
  \label{eqn:spaces_cell}
\end{equation}
and the following finite element spaces on the facets of the
triangulation of~$\Omega$:
\begin{equation}
  \begin{split}
    \bar{V}_h &= \cbr{\bar{v}_h \in \sbr{L^2(\Gamma^0)}^d:\ \bar{v}_h \in
      \sbr{P_{k}(F)}^d\ \forall\ F \in \mathcal{F},\ \bar{v}_h
      = 0 \ \mbox{on}\ \Gamma},
    \\
    \bar{Q}_h &= \cbr{\bar{q}_h \in L^2(\Gamma^0) : \ \bar{q}_h \in
      P_{k}(F) \ \forall\ F \in \mathcal{F}},
  \end{split}
  \label{eqn:spaces_facet}
\end{equation}
where $P_{k}(D)$ denotes the space of polynomials of degree~$k$ on
domain~$D$, with $k \ge 1$ and~$m \le k$.

For notational purposes, we introduce the spaces $V_h^{\star} =
V_h\times \bar{V}_h$, $Q_h^{\star} = Q_h\times \bar{Q}_h$, and
$X_h^{\star} = V_h^{\star}\times Q_h^{\star}$.  Function pairs in
$V_h^{\star}$ and $Q_h^{\star}$ will be denoted by boldface, e.g.,
${\bf v}_h = (v_h, \bar{v}_h)\in V_h^{\star}$ and ${\bf q}_h = (q_h,
\bar{q}_h)\in Q_h^{\star}$.

\subsection{Weak formulation}

We consider the weak formulation as presented in
\cite{Labeur:2012}. For the Stokes problem, it seeks $({\bf u}_{h},
     {\bf p}_{h}) \in X_{h}^{\star}$ such that
\begin{multline}
  \label{eq:momentum_equation_wf}
  \sum_{K\in\mathcal{T}}\int_{K}\nabla u_h : \nabla v_h\dif x
  + \sum_{K\in\mathcal{T}}\int_{\partial K}(\bar{u}_h- u_h)\cdot \frac{\partial v_h}{\partial n}\dif s
  - \sum_{K\in\mathcal{T}}\int_{K}p_h\nabla\cdot v_h\dif x
  \\
  + \sum_{K\in\mathcal{T}}\int_{\partial K}\hat{\sigma}_h n\cdot( v_h-\bar{v}_h)\dif s
  = \sum_{K\in\mathcal{T}}\int_{K} f\cdot v_h\dif x \qquad \forall {\bf v}_h\in V_h^{\star},
\end{multline}
and
\begin{equation}
  \label{eq:massequation_wf}
  \sum_{K\in\mathcal{T}}\int_{K} u_h\cdot\nabla q_h\dif x
  + \sum_{K\in\mathcal{T}}\int_{\partial K}\hat{u}_h\cdot n (\bar{q}_h-q_h)\dif s
  - \int_{\Gamma}\bar{u}_h\cdot n\bar{q}_h\dif s = 0 \qquad
  \forall {\bf q}_h\in Q^{\star}_h,
\end{equation}
with the numerical fluxes $\hat{\sigma}_h$ and $\hat{u}_h$ given by
\begin{equation}
  \label{eq:numericalfluxes}
  \hat{\sigma}_h := -\nabla u_h + \bar{p}_h I
    - \frac{\alpha_{v}}{h_K}(\bar{u}_h - u_h)\otimes n,
  \qquad
  \hat{u}_h := u_h - \alpha_{p} h_K(\bar{p}_h - p_h) n,
\end{equation}
and where $\alpha_{v} > 0$ and $\alpha_{p} \ge 0$ are penalty
parameters. Note that the numerical fluxes can take on different
values on opposite sides of a facet.  We will prove that the
formulation is stable for sufficiently large $\alpha_{v}$, akin to the
standard interior penalty method~\cite{Arnold:2002}.  For a
mixed-order formulation with $m = k - 1$ we will show that
$\alpha_{p}$ can be set to zero, and for the equal-order case ($k =
m$) we will show that $\alpha_{p}$ must be positive.

It will be convenient to express the method in a compact form,
therefore we introduce the bilinear forms:
\begin{subequations}
  \begin{align}
    \label{eq:formA}
    a_h({\bf u}, {\bf v}) :=&
    \sum_{K\in\mathcal{T}}\int_{K}\nabla u : \nabla v \dif x
    + \sum_{K\in\mathcal{T}}\int_{\partial K}\frac{\alpha_{v}}{h_K}(u - \bar{u})\cdot(v - \bar{v}) \dif s
    \\
    \nonumber
    &- \sum_{K\in\mathcal{T}}\int_{\partial K}\sbr{(u-\bar{u})\cdot \frac{\partial v}{\partial n}
    + \frac{\partial u}{\partial n}\cdot(v-\bar{v})} \dif s,
    \\
    \label{eq:formB}
    b_h({\bf p}, {\bf v}) :=&
    - \sum_{K\in\mathcal{T}}\int_{K}p \nabla \cdot v \dif x
    + \sum_{K\in\mathcal{T}}\int_{\partial K}(v-\bar{v})\cdot n
    \bar{p} \dif s,
    \\
    \label{eq:formC}
    c_h({\bf p}, {\bf q}) :=&
    \sum_{K\in\mathcal{T}}\int_{\partial K}\alpha_{p} h_K(p-\bar{p})(q-\bar{q}) \dif s.
  \end{align}
\end{subequations}
Solutions $({\bf u}_h,{\bf p}_h)\in X_h^{\star}$ satisfy
\begin{equation}
  \label{eq:compact_wf}
  B_h(({\bf u}_h, {\bf p}_h), ({\bf v}_h, {\bf q}_h))
  = \sum_{K\in\mathcal{T}}\int_{K}f\cdot v_h \dif x
  \qquad \forall ({\bf v}_h,{\bf q}_h)\in X_h^{\star},
\end{equation}
where
\begin{equation}
  \label{eq:bilinearformStokes}
  B_h(({\bf u}_h, {\bf p}_h), ({\bf v}_h, {\bf q}_h))
  :=
  a_h({\bf u}_h, {\bf v}_h)
  + b_h({\bf p}_h, {\bf v}_h)
  - b_h({\bf q}_h, {\bf u}_h)
  + c_h({\bf p}_h, {\bf q}_h).
\end{equation}

To provide some insights into the method, setting $\Bar{v}_{h} = 0$ we
note that~\cref{eq:momentum_equation_wf} is a cell-wise statement of
the momentum balance, subject to weak satisfaction of the boundary
condition provided by~$\bar{u}_{h}$ (using Nitsche's method).  Setting
$v_{h} = 0$, we note that \cref{eq:momentum_equation_wf} imposes weak
continuity of the numerical flux $\Hat{\sigma}_h$ across facets.
Equation~\cref{eq:massequation_wf} can be interpreted similarly, with
it enforcing the continuity equation locally (in terms of the
numerical flux $\Hat{u}_{h}$) and weak continuity of $\Hat{u}_{h}$
across facets. Different from conventional discontinuous Galerkin
methods, functions on cells are not directly coupled to their
neighbours via the numerical flux. Rather, functions on cells are
coupled indirectly via the `bar' functions that live only on facets.
This has the important implementation consequence that
degrees-of-freedom associated with $u_{h}$ and $p_{h}$ can be
eliminated cell-wise in favour of degrees-of-freedom associated with
$\Bar{u}_{h}$ and~$\Bar{p}_{h}$. This process is commonly known as
\emph{static condensation}. This avoids the greater number of global
degrees-of-freedom associated with standard discontinuous Galerkin
methods compared to conforming methods on the same mesh.

\subsection{Mass and momentum conservation}

It is straightforward to show that the method conserves mass locally
(cell-wise) in terms of the numerical flux~$\Hat{u}_{h}$. Setting
$v_h = \bar{v}_h = 0$ and $\bar{q}_h = 0$, and $q_h = 1$ on a cell $K$
and $q_h = 0$ on $\mathcal{T} \backslash K$
in~\cref{eq:massequation_wf},
\begin{equation}
  \int_{\partial K}\hat{u}_h\cdot n\dif s = 0 \qquad \forall K \in \mathcal{T}.
\end{equation}
In the case that $\alpha_{p} = 0$, $\Hat{u}_{h}$ and $u_{h}$ coincide.
Setting $v_h = \bar{v}_h = 0$ and $q_h = \bar{q}_h = 1$
in~\cref{eq:massequation_wf},
\begin{equation}
  \int_{\Gamma}\bar{u}_h\cdot n\dif s = 0.
\end{equation}

Noteworthy is that for simplices with $\alpha_{p} = 0$ and $m = k -
1$, i.e.~the divergence of a function in the velocity space $V_{h}$ is
contained in the pressure space~$Q_{h}$, the velocity field $u_{h}$ is
point-wise solenoidal within a cell.

Momentum conservation is addressed in \cite{Labeur:2012} for the
incompressible Navier--Stokes equations, where local momentum
conservation in terms of the numerical flux $\hat{\sigma}_h$ was
shown.  To see this, set $v_h = e_j$ on~$K$, where $e_j$ is a
canonical unit basis vector, and $v_h = 0$ on $\mathcal{T} \backslash
K$, $\bar{v}_h = 0$, $q_h = 0$ and $\bar{q}_h = 0$
in~\cref{eq:momentum_equation_wf}. This yields:
\begin{equation}
  \int_{\partial K} \hat{\sigma}_h n \dif x = \int_K f\dif x \qquad
  \forall K \in \mathcal{T}.
\end{equation}

\subsection{Relationship to a $H(\rm div)$-conforming formulation}

For a mixed-order formulation with $m=k-1$, the `bar' function
$\bar{p}_h$ acts as a Lagrange multiplier enforcing continuity of the
normal component of $u_h \in \Bar{V}_h$ across inter-element
boundaries. It is easy to see that by setting ${\bf v}_{h} = {\bf 0}$
and $q_{h} = 0$ in \cref{eq:compact_wf} that the normal component of
the velocity is continuous across facets,
i.e.~$u_{h} \in V_h^{\rm BDM}$, where $V_h^{\rm BDM}$ is a
Brezzi--Douglas--Marini (BDM) finite element space \cite{Boffi:book}:
\begin{equation}
  \label{eq:bdm_space}
  \begin{split}
    V_h^{\rm BDM}(K)
    =& \cbr{ v_h \in \sbr{P_k(K)}^d : v_h\cdot n \in L^2(\partial K),\
      v_h\cdot n|_F\in P_k(F)},
    \\
    V_h^{\rm BDM}
    =& \cbr{v_h \in H({\rm div};\Omega) :\ v_h|_K \in V_h^{\rm BDM}(K),\
      \forall K \in \mathcal{T}}.
  \end{split}
\end{equation}

Defining $V_h^{\star{\rm BDM}} = V_h^{\rm BDM} \times \bar{V}_h$ and
$X_h^{\star{\rm BDM}} = V_h^{\star{\rm BDM}} \times Q_h$, the
formulation in \cref{eq:compact_wf} is a hybridized \cite{Brezzi:1985}
form of: find the $({\bf u}_h, p_h) \in V_h^{\star{\rm BDM}} \times
Q_h$ such that
\begin{equation}
  \label{eq:bdm_compact_wf}
  B_h^{\rm BDM}(({\bf u}_h, p_h), ({\bf v}_h, q_h))
  = \sum_{K\in\mathcal{T}}\int_{K}f\cdot v_h \dif x
  \qquad \forall ({\bf v}_h, q_h)\in X_h^{\star{\rm BDM}},
\end{equation}
where
\begin{equation}
  \label{eq:bdm_bilinearformStokes}
  B_h^{\rm BDM}(({\bf u}_h, p_h), ({\bf v}_h, q_h))
  :=
  a_h({\bf u}_h, {\bf v}_h)
  + b_h^{\rm BDM}(p_h, {\bf v}_h)
  - b_h^{\rm BDM}(q_h, {\bf u}_h),
\end{equation}
and where
\begin{equation}
  \label{eq:bdm_bhform}
  b_h^{\rm BDM}(p, {\bf v}) := -\sum_{K\in\mathcal{T}}\int_{K}p \nabla \cdot v \dif x.
\end{equation}
If \cref{eq:compact_wf} and \cref{eq:bdm_compact_wf} have unique
solutions, then then solution pair $({\bf u}_h,p_h)$ to
\cref{eq:compact_wf} is the solution of \cref{eq:bdm_compact_wf}.

The $H(\rm div)$-conforming formulation will be convenient for
subsequent analysis as we will be able to neglect the Lagrange
multiplier terms. For implementation we recommend
\cref{eq:compact_wf}.

\section{Consistency, stability and boundedness}
\label{s:conStabBound}

In this section we demonstrate consistency, stability, boundedness and
well-posedness. To do this, we introduce extended function spaces
on~$\Omega$:
\begin{align}
  V(h) &:= V_h + \sbr{H_0^1(\Omega)}^d \cap \sbr{H^2(\Omega)}^d,
  \\
  Q(h) &:= Q_h + L_0^2(\Omega) \cap H^1(\Omega),
\end{align}
and extended function spaces on $\Gamma^{0}$ (facets):
\begin{align}
  \bar{V}(h) &:= \bar{V}_h + [H_0^{3/2}(\Gamma^0)]^d,
  \\
  \bar{Q}(h) &:= \bar{Q}_h + H^{1/2}_0(\Gamma^0),
\end{align}
where $[H_0^{3/2}(\Gamma^0)]^d$ and $H_0^{1/2}(\Gamma^0)$ are,
respectively, the trace spaces of $\sbr{H_0^1(\Omega)}^d \cap
\sbr{H^2(\Omega)}^d$ and $L_0^2(\Omega) \cap H^1(\Omega)$ on
facets~$\Gamma^0$. We introduce the trace operator $\gamma :
H^s(\Omega)\to H^{s - 1/2}(\Gamma^0)$ to restrict functions in
$H^s(\Omega)$ to $\Gamma^0$. For functions in $[H^s(\Omega)]^d$ the
trace operator is applied component-wise.  Even when not strictly
necessary, we will use the trace operator to make clear when a
function, usually the exact solution, is being restricted to facets.
For notational purposes we also introduce $V^{\star}(h) := V(h)\times
\bar{V}(h)$, $Q^{\star}(h) := Q(h) \times \bar{Q}(h)$ and
$X^{\star}(h) := V^{\star}(h) \times Q^{\star}(h)$.

We define two norms on $V^{\star}(h)$, namely,
\begin{equation}
\label{eq:stability_norm}
  \tnorm{ {\bf v} }_v^2 := \sum_{K\in\mathcal{T}}\norm{\nabla v }^2_{0,K}
  + \sum_{K\in\mathcal{T}} \frac{\alpha_{v}}{h_K}\norm{\bar{v} - v}^2_{0,\partial K},
\end{equation}
which will be used to prove stability of~$a_h$, and
\begin{equation}
  \label{eq:boundedness_norm}
  \tnorm{ {\bf v} }_{v'}^2 := \tnorm{ {\bf v} }_v^2
  + \sum_{K\in\mathcal{T}}\frac{h_K}{\alpha_v}\norm{\frac{\partial v}{\partial n}}^2_{0,\partial K},
\end{equation}
which will be used to prove boundedness of $a_h$.  From the discrete
trace inequality~\cite[Remark~1.47]{Pietro:book},
\begin{equation}
  \label{eq:trace_inequality}
  h_K^{1/2}\norm{v_h}_{0,\partial K}\le C_t\norm{v_h}_{0,K}
  \qquad \forall v_h \in P_k(K),
\end{equation}
where $C_{t}$ depends on $k$, spatial dimension and cell shape, it
follows that the norms $\tnorm{\cdot}_v$ and $\tnorm{\cdot}_{v'}$ are
equivalent on~$V_h^{\star}$:
\begin{equation}
  \label{eq:equivalentNorms_v_vprime}
  \tnorm{ {\bf v}_h }_{v} \le \tnorm{ {\bf v}_h }_{v'} \le c(1+\alpha_v^{-1})\tnorm{ {\bf v}_h }_{v},
\end{equation}
with $c>0$ a constant independent of $h$,
see~\cite[Eq. (5.5)]{Wells:2011}.

We introduce a `pressure semi-norm':
\begin{equation}
  \label{eq:pressureseminorm}
  \envert{{\bf q}}_p^2 :=
  \sum_{K \in \mathcal{T}} \alpha_p h_K \norm{\bar{q} - q}^2_{0, \partial K},
\end{equation}
and define a norm on~$X_h^{\star}$ by:
\begin{equation}
  \tnorm{({\bf v}_h,{\bf q}_h)}_{v,p}^2 := \tnorm{{\bf v}_h}_v^2
  + \norm{q_h}^2_{0,\Omega} + \envert{{\bf q}_h}^2_p,
\end{equation}
and on $X^{\star}(h)$ we define
\begin{equation}
  \begin{split}
    \tnorm{({\bf v},{\bf q})}_{v',p'}^2
    &:= \tnorm{({\bf v},{\bf q})}_{v,p}^2
    + \sum_{K\in\mathcal{T}}\frac{h_K}{\alpha_v}\norm{\frac{\partial v}{\partial n}}^2_{0,\partial K}
    + \sum_{K\in\mathcal{T}}h_K\norm{\bar{q}}^2_{0,\partial K}
    \\
    &= \tnorm{{\bf v}}_v^2 + \norm{q}^2_{0,\Omega}
    + \envert{{\bf q}}^2_p
    + \sum_{K\in\mathcal{T}}\frac{h_K}{\alpha_v}\norm{\frac{\partial v}{\partial n}}^2_{0,\partial K}
    + \sum_{K\in\mathcal{T}}h_K\norm{\bar{q}}^2_{0,\partial K}
    \\
    &= \tnorm{{\bf v}}_{v'}^2 + \norm{q}^2_{0,\Omega}
    + \envert{{\bf q}}^2_p
    + \sum_{K\in\mathcal{T}}h_K\norm{\bar{q}}^2_{0,\partial K}.
  \end{split}
\end{equation}
Note that \cref{eq:pressureseminorm} vanishes for the case
of~$\alpha_{p} = 0$.

\subsection{Consistency}

We now prove consistency of the method. It is assumed that
$(u, p) \in X$ solves the Stokes problem~\cref{eq:stokes}, where
\begin{equation}
  X := \del{\sbr{H_0^1(\Omega)}^d \cap \sbr{H^2(\Omega)}^d} \times
  \del{L_0^2(\Omega) \cap H^1(\Omega)}.
\end{equation}

\begin{lemma}[Consistency] \label{lem:consistency}
  If $(u, p)\in X$ solves the Stokes problem~\cref{eq:stokes},
  letting ${\bf u} = (u, \gamma(u))$ and ${\bf p} = (p, \gamma(p))$,
  then
  \begin{equation}
    B_h(({\bf u}, {\bf p}), ({\bf v}_h, {\bf q}_h))
    =
    \sum_{K\in\mathcal{T}}\int_Kf\cdot v_h\dif x \qquad
    \forall ({\bf v}_h,{\bf q}_h)\in X_h^{\star}.
  \end{equation}
\end{lemma}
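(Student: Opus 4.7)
The plan is to substitute $\mathbf{u} = (u,\gamma(u))$ and $\mathbf{p} = (p,\gamma(p))$ directly into $B_h$, exploit the fact that $u \in [H^2(\Omega)]^d \cap [H_0^1(\Omega)]^d$ and $p \in H^1(\Omega)$ make the cell-wise traces single-valued across facets, and then use cell-wise integration by parts to recover the strong form $-\Delta u + \nabla p = f$, whose residual pairs against $v_h$ to give the right-hand side.

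First I would note that because $u$ is $H^1$-conforming with zero boundary trace, $\gamma(u)$ agrees with $u|_{\partial K}$ on every facet (and vanishes on $\Gamma$), so the jump $u - \bar{u}$ vanishes identically on every $\partial K$. This immediately kills the interior penalty term and the $(u-\bar{u})\cdot \partial v_h/\partial n$ term in $a_h(\mathbf{u},\mathbf{v}_h)$, as well as the term $\int_{\partial K}(u-\bar{u})\cdot n\, \bar{q}_h\dif s$ coming from $-b_h(\mathbf{q}_h,\mathbf{u})$. Similarly $p - \bar{p} = 0$ makes $c_h(\mathbf{p},\mathbf{q}_h)$ vanish. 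The divergence-free condition $\nabla\cdot u = 0$ handles the cell-interior part of $-b_h(\mathbf{q}_h,\mathbf{u})$.

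What remains is
\begin{equation*}
  \sum_{K}\int_K \nabla u : \nabla v_h \dif x
  - \sum_{K}\int_{\partial K}\frac{\partial u}{\partial n}\cdot (v_h - \bar{v}_h)\dif s
  - \sum_{K}\int_K p\, \nabla\cdot v_h \dif x
  + \sum_{K}\int_{\partial K}(v_h-\bar{v}_h)\cdot n\, \bar{p} \dif s.
\end{equation*}
I would integrate by parts cell-wise on the first and third terms to produce $\sum_K\int_K(-\Delta u + \nabla p)\cdot v_h\dif x = \sum_K\int_K f\cdot v_h\dif x$ by \cref{eq:stokes_a}, plus boundary contributions $\sum_K\int_{\partial K}[(\partial u/\partial n)\cdot v_h - p\, v_h\cdot n]\dif s$. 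Combining with the two existing boundary terms and using $\bar{p}=p$ on facets, the boundary integrand collapses to $(\partial u/\partial n)\cdot \bar{v}_h - p\, \bar{v}_h\cdot n$.

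The last step is to argue that $\sum_{K}\int_{\partial K}[(\partial u/\partial n)\cdot \bar{v}_h - p\, \bar{v}_h\cdot n]\dif s = 0$. On each interior facet $F = \partial K^+\cap\partial K^-$, the single-valuedness of $\partial u/\partial n$ and $p$ (thanks to $H^2$/$H^1$ regularity) combined with the opposite orientations of the outward normals of $K^+$ and $K^-$ against the single-valued trace $\bar{v}_h$ produces exact cancellation; on boundary facets the space $\bar{V}_h$ imposes $\bar{v}_h=0$. This facet-by-facet cancellation is the only real subtlety: one must be careful that the sum of cell-wise boundary integrals is genuinely a sum over oriented facets, which is where the condition $\bar{v}_h|_\Gamma=0$ in the definition of $\bar{V}_h$ is essential. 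Once that is verified, the identity $B_h((\mathbf{u},\mathbf{p}),(\mathbf{v}_h,\mathbf{q}_h)) = \sum_K\int_K f\cdot v_h\dif x$ follows.
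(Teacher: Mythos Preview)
Your proposal is correct and follows essentially the same approach as the paper's proof: both exploit $u-\bar{u}=0$ and $p-\bar{p}=0$ on facets to kill the jump/penalty terms, integrate by parts cell-wise to recover $-\Delta u+\nabla p=f$, and use single-valuedness of $\nabla u$ and $p$ across interior facets together with $\bar{v}_h|_\Gamma=0$ to eliminate the remaining facet contributions. The only cosmetic difference is ordering---the paper handles $a_h$ and $b_h$ separately before combining, whereas you clear all jump terms first---but the logical content is identical.
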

\begin{proof}
  We consider each form in the definition of $B_h$ separately. Using
  that $\Bar{u} = \gamma(u)$ and applying integration by parts
  to~\cref{eq:formA}, we find that
  \begin{equation}
    \label{eq:proof_consis_a}
    \begin{split}
      a_h({\bf u}, {\bf v}_h) = &
      \int_{\Omega}\nabla u : \nabla v_h\dif x
      + \sum_{K\in\mathcal{T}}\int_{\partial K}\frac{\alpha_{v}}{h_K}(\bar{u}-u)\cdot(\bar{v}_h- v_h)\dif s
      \\
      &+ \sum_{K\in\mathcal{T}}\int_{\partial K}\sbr{\frac{\partial u}{\partial n}\cdot(\bar{v}_h-v_h)
        + (\bar{u}- u)\cdot\frac{\partial v_h}{\partial n}}\dif s
      \\
      = &
      \int_{\Omega}\nabla u : \nabla v_h\dif x
      - \sum_{K\in\mathcal{T}}\int_{\partial K}\frac{\partial u}{\partial n} \cdot(v_h-\bar{v}_h)\dif s
      \\
      = &
      -\int_{\Omega} \Delta u \cdot v_h\dif x
      + \sum_{K\in\mathcal{T}}\int_{\partial K}\frac{\partial u}{\partial n}\cdot\bar{v}_h\dif s.
    \end{split}
  \end{equation}
  Next, using that $\bar{p} = \gamma(p)$ and applying integration by
  parts to~\cref{eq:formB}, we find that
  \begin{equation}
    \label{eq:proof_consis_b_pv}
    \begin{split}
      b_h({\bf p}, {\bf v}_h) = &
    - \int_{\Omega}p \nabla \cdot v_h\dif x
    + \sum_{K\in\mathcal{T}}\int_{\partial K}(v_h-\bar{v}_h)\cdot n
    \bar{p}\dif s
    \\
    = &
    \int_{\Omega}\nabla p \cdot v_h\dif x
    + \sum_{K\in\mathcal{T}}\int_{\partial K} v_h\cdot n(\bar{p}-p)\dif s
    - \sum_{K\in\mathcal{T}}\int_{\partial K}\bar{v}_h\cdot n\bar{p}\dif s
    \\
    = &
    \int_{\Omega}\nabla p \cdot v_h\dif x
    - \sum_{K\in\mathcal{T}}\int_{\partial K}\bar{v}_h\cdot n\bar{p}\dif s.
    \end{split}
  \end{equation}
  Adding~\cref{eq:proof_consis_a} and~\cref{eq:proof_consis_b_pv}
  and using that $\bar{p} = \gamma(p)$, we obtain
  \begin{equation}
    \int_{\Omega}\del{-\Delta u + \nabla p}\cdot v_h\dif x
    - \sum_{K\in\mathcal{T}}\int_{\partial K} \del{-\nabla u + \bar{p}I} n
      \cdot \bar{v}_h \dif s
      = \int_{\Omega} f \cdot v_h \dif x.
  \end{equation}
  Consider the facet integrals:
  \begin{equation}
    \sum_{K\in\mathcal{T}}\int_{\partial K} \del{-\nabla u + \bar{p}I} n
    \cdot\bar{v}_h\dif s = \int_{\Gamma}\del{-\nabla u + \bar{p}I} n
    \cdot\bar{v}_h\dif s = 0,
  \end{equation}
  where the first equality is due to the single-valuedness of $u$,
  $\bar{p}$ and $\bar{v}_h$ on element boundaries, and the second
  equality is due to $\bar{v}_h = 0$ on~$\Gamma$. We therefore
  conclude for the momentum equation that
  \begin{equation}
    a_h({\bf u}, {\bf v}_h) + b_h({\bf p}, {\bf v}_h)
      = \int_{\Omega} f \cdot v_{h} \dif x.
  \end{equation}

  We next consider the continuity equation. First note that
  \begin{equation}
    b_h({\bf q}_h, {\bf u}) =
    - \int_{\Omega}q_h \nabla \cdot u\dif x
    + \sum_{K\in\mathcal{T}}\int_{\partial K}(u-\bar{u})\cdot n
    \bar{q}\dif s = 0,
  \end{equation}
  because $\bar{u} = \gamma(u)$ and~$\nabla \cdot u = 0$. Furthermore,
  \begin{equation}
    c_h({\bf p}, {\bf q}_h) =
    \sum_{K\in\mathcal{T}} \int_{\partial K} \alpha_{p} h_K (\bar{p}-p) (\bar{q}_h-q_h)\dif s = 0,
  \end{equation}
  because~$\bar{p} = \gamma(p)$. It follows that
  \begin{equation}
    - b_h({\bf q}_h, {\bf u}) + c_h({\bf p}, {\bf q}_h) = 0,
  \end{equation}
  concluding the proof.
\end{proof}

\subsection{Stability and boundedness of the vector-Laplacian term}

Some results from \cite{Wells:2011} are generalised in this section
to the vector-Laplacian term $a_h$, and are provided here for
completeness.

\begin{lemma}[Stability of $a_h$]
  \label{lem:stab_ah}
  There exists a $\beta_{v} > 0$, independent of $h$, and a constant
  $\alpha_0 > 0$ such that for $\alpha_{v} > \alpha_0$ and for
  all~${\bf v}_h\in V_h^{\star}$
  \begin{equation}
    \label{eq:stab_ah}
    a_h({\bf v}_h, {\bf v}_h) \ge \beta_v \tnorm{{\bf v}_h}^2_v.
  \end{equation}
\end{lemma}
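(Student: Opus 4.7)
The plan is to set both arguments of $a_h$ equal to ${\bf v}_h$, isolate the portion that already coincides with $\tnorm{{\bf v}_h}_v^2$, and then absorb the remaining symmetric consistency term into these two positive pieces. Concretely, from the definition in~\cref{eq:formA},
\begin{equation*}
  a_h({\bf v}_h, {\bf v}_h) = \tnorm{{\bf v}_h}_v^2
  - 2\sum_{K\in\mathcal{T}}\int_{\partial K}(v_h - \bar{v}_h)\cdot \frac{\partial v_h}{\partial n}\dif s,
\end{equation*}
so everything reduces to bounding the facet cross term from above in terms of $\tnorm{{\bf v}_h}_v^2$ with a coefficient strictly less than one.

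Next, I would apply Cauchy--Schwarz on each $\partial K$ and then Young's inequality with a parameter $\varepsilon>0$, choosing the weights so that the first factor is compatible with the jump contribution to $\tnorm{\cdot}_v^2$:
\begin{equation*}
  2\envert{\int_{\partial K}(v_h - \bar{v}_h)\cdot \frac{\partial v_h}{\partial n}\dif s}
  \le
  \varepsilon \frac{\alpha_v}{h_K}\norm{v_h - \bar{v}_h}^2_{0,\partial K}
  + \frac{1}{\varepsilon}\frac{h_K}{\alpha_v}\norm{\frac{\partial v_h}{\partial n}}^2_{0,\partial K}.
\end{equation*}
Summing over $K$, the first term is directly bounded by $\varepsilon$ times the jump part of $\tnorm{{\bf v}_h}_v^2$. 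For the second term I would invoke the discrete trace inequality \cref{eq:trace_inequality}, applied component-wise to $\partial v_h/\partial n$, to obtain $h_K\norm{\partial v_h/\partial n}^2_{0,\partial K}\le C_t^2\norm{\nabla v_h}^2_{0,K}$, so that the contribution is controlled by $(C_t^2/(\varepsilon\alpha_v))\sum_K\norm{\nabla v_h}^2_{0,K}$.

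Combining these estimates gives
\begin{equation*}
  a_h({\bf v}_h, {\bf v}_h) \ge
  \del{1 - \frac{C_t^2}{\varepsilon \alpha_v}}
  \sum_{K\in\mathcal{T}}\norm{\nabla v_h}^2_{0,K}
  + (1 - \varepsilon)\sum_{K\in\mathcal{T}}\frac{\alpha_v}{h_K}\norm{v_h - \bar{v}_h}^2_{0,\partial K},
\end{equation*}
and I would then fix $\varepsilon\in(0,1)$, e.g.\ $\varepsilon=1/2$, and set $\alpha_0 := C_t^2/\varepsilon$ (here $2C_t^2$) so that for any $\alpha_v>\alpha_0$ both coefficients are strictly positive. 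Taking $\beta_v := \min\{1-C_t^2/(\varepsilon\alpha_v),\, 1-\varepsilon\}>0$, which depends only on $\alpha_v$, $\varepsilon$, and $C_t$, yields \cref{eq:stab_ah}. The main obstacle is nothing conceptual but rather the bookkeeping: one has to distribute the Young weight so that the $h_K$-scaling on the normal-derivative side matches exactly what the discrete trace inequality consumes, and simultaneously so that the coefficient on the gradient piece and the coefficient on the jump piece both remain positive after the absorption; the threshold $\alpha_0$ emerges precisely from this constraint.
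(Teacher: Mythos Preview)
Your proof is correct and follows essentially the same route as the paper: expand $a_h({\bf v}_h,{\bf v}_h)$, control the symmetric consistency term via Cauchy--Schwarz and the discrete trace inequality~\cref{eq:trace_inequality}, and absorb it into the two positive pieces. The only cosmetic difference is in the absorption step: you use Young's inequality with a free parameter $\varepsilon$, whereas the paper uses the equivalent algebraic inequality $x^2 - 2\Psi xy + y^2 \ge \tfrac{1}{2}(1-\Psi^2)(x^2+y^2)$ for $0<\Psi<1$, leading to $\alpha_0 = c^2$ (with $c$ the trace constant) and $\beta_v = \tfrac{1}{2}(1-\alpha_0/\alpha_v)$.
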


\begin{proof}
  By definition of the bilinear form $a_h$ in~\cref{eq:formA},
  \begin{multline}
    \label{eq:ahvvdef}
    a_h({\bf v}_h, {\bf v}_h)
    =
    \sum_{K\in\mathcal{T}}\norm{\nabla v_h}_{0,K}^2
    + \sum_{K\in\mathcal{T}}\frac{\alpha_{v}}{h_K}\norm{v_h - \bar{v}_h}^2_{0,\partial K}
    \\
    + 2\sum_{K\in\mathcal{T}}\int_{\partial K}(\bar{v}_h-v_h)\cdot \frac{\partial v_h}{\partial n} \dif s.
  \end{multline}
  Applying the Cauchy--Schwarz inequality and a trace inequality to
  the third term on the right-hand side of~\cref{eq:ahvvdef},
  \begin{equation}
    \begin{split}
      \envert{2\sum_{K\in\mathcal{T}}\int_{\partial K}(\bar{v}_h-v_h)\cdot \frac{\partial v_h}{\partial n} \dif s}
      &\le 2\frac{h_K^{1/2}}{\alpha_v^{1/2}}\norm{\frac{\partial v_h}{\partial n}}_{0,\partial K}
      \frac{\alpha_v^{1/2}}{h_K^{1/2}}\norm{\bar{v}_h-v_h}_{0,\partial K}
      \\
      &\le 2 c\alpha_v^{-1/2} \norm{\nabla v_h}_{0,K}\frac{\alpha_v^{1/2}}{h_K^{1/2}}\norm{\bar{v}_h-v_h}_{0,\partial K}.
    \end{split}
  \end{equation}
  Combined with~\cref{eq:ahvvdef} we obtain
  \begin{equation}
    \label{eq:ahvvdefineq}
    a_h({\bf v}_h, {\bf v}_h)
    \ge
    \sum_{K\in\mathcal{T}} \del{
      \norm{\nabla v_h}_{0,K}^2
      + 2 c \alpha_v^{-1/2} \norm{\nabla v_h}_{0,K}\frac{\alpha_v^{1/2}}{h_K^{1/2}}\norm{\bar{v}_h-v_h}_{0,\partial K}
      + \frac{\alpha_{v}}{h_K}\norm{v_h - \bar{v}_h}^2_{0,\partial K}}.
  \end{equation}
  Note that for any $0 < \Psi < 1$ the following inequality holds for
  $x,y\in \mathbb{R}$: $x^2 - 2 \Psi xy + y^2 \ge
  \tfrac{1}{2}(1-\Psi^2)(x^2 + y^2)$~\cite{Pietro:book}.  Taking $x =
  \norm{\nabla v_h}_{0, K}$, $y = \alpha_{v}^{1/2} h_K^{-1/2}
  \norm{v_h - \bar{v}_h}_{0, \partial K}$ and $\Psi = c
  \alpha_v^{-1/2}$, then if $\alpha_v > c^2 = \alpha_0$ it follows
  that
  \begin{equation}
    \label{eq:ahvvdefineqend}
    a_h({\bf v}_h, {\bf v}_h) \ge
    \tfrac{1}{2}(1-\alpha_0/\alpha_v)\tnorm{ {\bf v}_h }^2_v,
  \end{equation}
  so that the result follows with
  $\beta_v = \tfrac{1}{2}(1-\alpha_0/\alpha_v)$.
\end{proof}

\begin{lemma}[Boundedness of $a_h$]
  \label{lem:bound_ah}
  There exists a $c > 0$, independent of $h$, such that for all ${\bf
    u}\in V^{\star}(h)$ and for all ${\bf v}_h\in V_h^{\star}$
  \begin{equation}
    \label{eq:bound_ah}
    \envert{a_h({\bf u}, {\bf v}_h)}
    \le C_a\tnorm{{\bf u}}_{v'}\tnorm{{\bf v}_h}_v,
  \end{equation}
  with $C_a = c(1+\alpha_v^{-1/2})$.
\end{lemma}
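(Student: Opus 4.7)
The plan is to expand $a_h({\bf u}, {\bf v}_h)$ into its four constituent terms as in \cref{eq:formA} and bound each by Cauchy--Schwarz, absorbing the resulting factors into either $\tnorm{{\bf u}}_{v'}$ or $\tnorm{{\bf v}_h}_v$. The bulk term $\sum_K \int_K \nabla u : \nabla v_h$ and the penalty term $\sum_K \int_{\partial K} \frac{\alpha_v}{h_K}(u - \bar{u}) \cdot (v_h - \bar{v}_h)\dif s$ are immediate: a cell-wise Cauchy--Schwarz followed by a discrete Cauchy--Schwarz over cells produces $\tnorm{{\bf u}}_v \tnorm{{\bf v}_h}_v$, which is majorised by $\tnorm{{\bf u}}_{v'} \tnorm{{\bf v}_h}_v$.

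For the consistency term $-\sum_K \int_{\partial K} (u - \bar{u}) \cdot \frac{\partial v_h}{\partial n}\dif s$, I would use the discrete trace inequality \cref{eq:trace_inequality} applied to the polynomial $\nabla v_h$ to obtain $\norm{\partial v_h/\partial n}_{0,\partial K} \le C_t h_K^{-1/2} \norm{\nabla v_h}_{0,K}$, then split the weights as $(\alpha_v^{1/2} h_K^{-1/2} \norm{u - \bar{u}}_{0,\partial K}) \cdot (\alpha_v^{-1/2} \norm{\nabla v_h}_{0,K})$ so that the sum over $K$ is controlled by $C_t \alpha_v^{-1/2} \tnorm{{\bf u}}_v \tnorm{{\bf v}_h}_v$. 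This is where the factor $\alpha_v^{-1/2}$ in $C_a$ arises.

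The remaining adjoint-consistency term $-\sum_K \int_{\partial K} \frac{\partial u}{\partial n}\cdot (v_h - \bar{v}_h)\dif s$ is the one place where the weaker $\tnorm{\cdot}_{v'}$ norm is actually needed: because $u$ is not a discrete function, the trace inequality is not available on $\partial u/\partial n$, so I cannot convert this boundary term into a cell norm. Instead I split the weights as $(h_K^{1/2} \alpha_v^{-1/2} \norm{\partial u/\partial n}_{0,\partial K}) \cdot (\alpha_v^{1/2} h_K^{-1/2} \norm{v_h - \bar{v}_h}_{0,\partial K})$ and apply Cauchy--Schwarz over cells; the first factor is precisely the extra contribution that defines $\tnorm{{\bf u}}_{v'}$ in \cref{eq:boundedness_norm}, while the second is part of $\tnorm{{\bf v}_h}_v$.

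Collecting the four estimates yields a bound of the form $(3 + C_t \alpha_v^{-1/2}) \tnorm{{\bf u}}_{v'} \tnorm{{\bf v}_h}_v$, which is absorbed into $C_a = c(1 + \alpha_v^{-1/2})$ for $c = \max(3, C_t)$. The main subtlety, rather than any genuine obstacle, is recognising why the asymmetry in the two arguments of $a_h$ forces an asymmetric estimate: the consistency term can exploit the polynomial trace inequality on $v_h$, but the adjoint-consistency term cannot, which is precisely the reason the $\tnorm{\cdot}_{v'}$ norm was defined with the additional normal-derivative contribution.
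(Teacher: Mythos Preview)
Your proof is correct and follows exactly the same strategy as the paper: split $a_h$ into its four terms, bound the bulk and penalty terms directly by Cauchy--Schwarz, use the discrete trace inequality \cref{eq:trace_inequality} on $\partial v_h/\partial n$ for the consistency term (producing the $\alpha_v^{-1/2}$ factor), and absorb $\partial u/\partial n$ into the extra contribution of $\tnorm{\cdot}_{v'}$ for the adjoint-consistency term. The paper only writes out the consistency term in detail and dismisses the rest with ``similar bounds follow after applying the Cauchy--Schwarz inequality,'' so your exposition is in fact more complete.
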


\begin{proof}
  From the definition of $a_h$ in~\cref{eq:formA},
  \begin{multline}
    a_h({\bf u}, {\bf v}_h) =
    \underbrace{\sum_{K} \int_{K}\nabla u : \nabla v_h \dif x}_{T_{1}}
    - \underbrace{\sum_{K\in\mathcal{T}}\int_{\partial K} \frac{\partial u}{\partial n}\cdot(v_h-\bar{v}_h) \dif s}_{T_{2}}
\\
- \underbrace{\sum_{K\in\mathcal{T}}\int_{\partial K}(u - \bar{u})
  \cdot \frac{\partial v_h}{\partial n} \dif s}_{T_{3}}
    + \underbrace{\sum_{K\in\mathcal{T}}\int_{\partial K}\frac{\alpha_{v}}{h_K}(u - \bar{u})\cdot
    (v_h - \bar{v}_h) \dif s}_{T_{4}}.
  \end{multline}
  A bound for $T_3$ follows from
  \begin{equation}
    \begin{split}
      \envert{T_3} &\le \del{\sum_{K\in\mathcal{T}}\frac{h_K}{\alpha_v}\norm{\frac{\partial v_h}{\partial n}}^2_{0,\partial K}}^{1/2}
      \del{\sum_{K\in\mathcal{T}}\frac{\alpha_v}{h_K}\norm{\bar{u}-u}^2_{0,\partial K}}^{1/2}
      \\
      &\le c\alpha_v^{-1/2}\del{\sum_{K\in\mathcal{T}}\norm{\nabla v_h}^2_{0, K}}^{1/2}
      \tnorm{{\bf u}}_v
      \\
      &\le c\alpha_v^{-1/2}\tnorm{{\bf v}_h}_v\tnorm{{\bf u}}_{v'},
    \end{split}
    \label{eqn:t3-bound}
  \end{equation}
  where $c > 0$ is a constant independent of~$h$. For the second
  inequality in~\cref{eqn:t3-bound} we used the discrete trace
  inequality~\cref{eq:trace_inequality}.  Similar bounds for $T_1$,
  $T_2$ and $T_4$ follow after applying the Cauchy--Schwarz
  inequality. Collecting all bounds proves~\cref{eq:bound_ah}.
\end{proof}

\subsection{Stability of the pressure--velocity coupling term}

We now examine stability of the discrete pressure--velocity coupling
term~$b_h$. The analysis of~$b_h$ for the equal-order and mixed-order
cases differs, hence we will prove stability of~$b_h$ for the two
cases separately.

To prove stability of the discrete pressure--velocity coupling term,
$b_h$, we remark that satisfaction of the inf-sup condition for the
infinite-dimensional problem~\cref{eq:infsupcondition_continuous} is
equivalent to there existing for all $q \in L^{2}_{0}(\Omega)$ a
$v_q \in \sbr{H^1_0(\Omega)}^d$ that satisfies
\begin{equation}
  \label{eq:stab_b}
  q = \nabla\cdot v_q \quad \mbox{and} \quad
  \beta_c\norm{v_q}_{1,\Omega} \le \norm{q}_{0,\Omega}
\end{equation}
(see, e.g.~\cite[Theorem~6.5]{Pietro:book}). We make extensive use of
this result.

We state now the stability lemma for the pressure--velocity coupling.
\begin{lemma}[Stability of $b_h$]
  \label{lem:stab_bh}
  There exists a constant $\beta_p > 0$, independent of $h$, such that
  for all~${\bf q}_h \in Q_h^{\star}$
  \begin{equation}
    \label{eq:stab_bh}
    \beta_p \norm{q_h}_{0,\Omega} \le \sup_{{\bf w}_h\in V_h^{\star}}
    \frac{ b_h({\bf q}_h, {\bf w}_h) }{\tnorm{ {\bf w}_h }_v}
    + \envert{{\bf q}_h}_p.
  \end{equation}
\end{lemma}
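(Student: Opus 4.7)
The strategy is a Fortin-type argument that lifts the continuous inf-sup \cref{eq:stab_b} to the hybridized discrete setting. Given ${\bf q}_h = (q_h, \bar q_h) \in Q_h^\star$, I would first split $q_h$ into its zero-mean part and its mean. Applying \cref{eq:stab_b} to the zero-mean piece yields $v \in [H_0^1(\Omega)]^d$ with $\nabla \cdot v = q_h$ (modulo the constant) and $\beta_c \norm{v}_{1,\Omega} \le \norm{q_h}_{0,\Omega}$. The test function realizing the supremum is then built as a discrete surrogate for $v$ in $V_h^\star$.

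The key ingredient is a Fortin-type operator $\Pi_h : [H_0^1(\Omega)]^d \to V_h$, together with a companion facet interpolant $\bar \Pi_h v \in \bar V_h$ (which vanishes on $\Gamma$ since $v$ does), satisfying (i) the divergence-preservation property $\int_K (\nabla \cdot \Pi_h v)\, q_h \,\dif x = \int_K (\nabla \cdot v)\, q_h \,\dif x$ for all $q_h \in Q_h$ and $K \in \mathcal{T}$, and (ii) the broken-$H^1$ stability $\tnorm{(\Pi_h v, \bar \Pi_h v)}_v \le C \norm{v}_{1,\Omega}$ with $C$ independent of $h$. Such an operator can be constructed as a quasi-interpolant into $V_h$ corrected to match the continuous divergence against $Q_h$, or, in the mixed-order setting, as a BDM-type interpolant exploiting the $H({\rm div})$-conformity identified in the preceding subsection.

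Testing with ${\bf w}_h = -(\Pi_h v, \bar \Pi_h v)$ and using property~(i),
\begin{equation*}
  b_h({\bf q}_h, {\bf w}_h)
  = \sum_{K \in \mathcal{T}} \int_K q_h\, \nabla \cdot \Pi_h v \,\dif x
  - \sum_{K \in \mathcal{T}} \int_{\partial K} (\Pi_h v - \bar \Pi_h v) \cdot n\, \bar q_h \,\dif s
  = \norm{q_h}^2_{0,\Omega} - R,
\end{equation*}
where the first term is evaluated using $\nabla \cdot v = q_h$ and $R$ denotes the facet contribution. Writing $\bar q_h = q_h + (\bar q_h - q_h)$ splits $R$ into two pieces: the $(\bar q_h - q_h)$-part is controlled by $C(\alpha_v \alpha_p)^{-1/2} \envert{{\bf q}_h}_p \tnorm{{\bf w}_h}_v$ via Cauchy--Schwarz with weights $\alpha_v/h_K$ and $\alpha_p h_K$, while the $q_h$-part is made to vanish in the mixed-order case ($m = k-1$) by choosing $\bar \Pi_h$ so that $(\Pi_h v - \bar \Pi_h v) \cdot n$ is $L^2(F)$-orthogonal to $P_{k-1}(F)$ on each facet. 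In the equal-order case ($m = k$) the $q_h$-part cannot be eliminated and must also be absorbed into $\envert{{\bf q}_h}_p$, which is precisely why $\alpha_p > 0$ is required there. Combined with $\tnorm{{\bf w}_h}_v \le C\beta_c^{-1} \norm{q_h}_{0,\Omega}$ from (ii), dividing and taking the supremum over ${\bf w}_h \in V_h^\star$ yields the claimed estimate.

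The principal technical obstacle is the construction of $\Pi_h$ (and its facet companion $\bar \Pi_h$) simultaneously satisfying divergence-preservation, broken-$H^1$ stability, and the facet orthogonality property appropriate to the polynomial degree configuration. The need for distinct constructions in the mixed-order and equal-order cases is the reason the proof splits along this dichotomy, and it is in the equal-order case that the penalty form $c_h$ with $\alpha_p > 0$ genuinely enters the bound. A minor bookkeeping step recovers the constant component of $q_h$ subtracted at the outset, either via an auxiliary test function that detects the constant mode through facet flux contributions of ${\bf w}_h$, or by observing that the constant contributes to $\envert{{\bf q}_h}_p$ whenever $\bar q_h$ differs from the same constant.
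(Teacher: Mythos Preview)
Your Fortin-operator strategy and the use of a BDM-type interpolant in the mixed-order case match the paper's approach. In that case, though, the mechanism is simpler than you describe: once $\Pi_h$ is the BDM interpolant, the entire facet remainder $R$ vanishes directly because the normal component of $\Pi_{\rm BDM}v$ is single-valued and vanishes on $\Gamma$ (likewise for $\bar\Pi_h v$), while $\bar q_h$ is single-valued. No splitting of $\bar q_h$ is needed.

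In the equal-order case there is a genuine gap. After writing $\bar q_h = q_h + (\bar q_h - q_h)$, your ``$q_h$-part''
\[
\sum_{K\in\mathcal T}\int_{\partial K}(\Pi_h v - \bar\Pi_h v)\cdot n\, q_h\,\dif s
\]
cannot be ``absorbed into $\envert{{\bf q}_h}_p$'' as you assert: the pressure semi-norm controls only the jump $\bar q_h - q_h$, not $q_h$ itself. A direct Cauchy--Schwarz bound on this term yields at best $C\alpha_v^{-1/2}\tnorm{{\bf w}_h}_v\,\norm{q_h}_{0,\Omega}$, and since $\tnorm{{\bf w}_h}_v\le C(1+\alpha_v)^{1/2}\norm{q_h}_{0,\Omega}$ the resulting contribution is of order $\norm{q_h}_{0,\Omega}^2$ with a constant that does not become small; it cannot be moved to the left-hand side.

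The paper circumvents this by comparing $\Pi_h v$ not with the discrete facet function $\bar\Pi_h v$ but with the continuous lift $v$ itself. Because $v\in\sbr{H_0^1(\Omega)}^d$ is single-valued across facets and vanishes on $\Gamma$, one has $\sum_K\int_{\partial K}\bar q_h\,v\cdot n\,\dif s = 0$, and after this substitution the remainder takes the form
\[
T_2=\sum_{K\in\mathcal T}\int_{\partial K}(\bar q_h - q_h)(\Pi_h v - v)\cdot n\,\dif s,
\]
which now genuinely pairs the pressure jump with an interpolation error and is bounded by $c\,(\alpha_p\beta_c)^{-1}\envert{{\bf q}_h}_p\,\norm{q_h}_{0,\Omega}$ via the approximation estimate $\norm{\Pi_h v - v}_{0,F}\le ch_K^{1/2}\envert{v}_{1,K}$. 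This replacement by the continuous $v$ is the idea your equal-order argument is missing.
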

We prove the above lemma for the equal-order and mixed-order cases in
the following sections. Recall that $\envert{{\bf q}_h}_p$ is zero for
the mixed-order case since~$\alpha_{p} = 0$.

\subsubsection{Equal-order case}
For the equal-order case ($k = m$ in~\cref{eqn:spaces_cell}), we
introduce the projections $\Pi_h$ and $\bar{\Pi}_h$, where
$\Pi_h : [H^1(\Omega)]^d \to V_h$ is any projection such that
\begin{equation}
  \label{eq:Piprojection_K}
 \int_{K}(\Pi_h v - v)\cdot y_h \dif x = 0 \qquad
  \forall y_h \in [P_{k - 1}(K)]^{d}
\end{equation}
for all $K \in \mathcal{T}_h$, and
$\bar{\Pi}_h : [H^1(\Omega)]^d \to \Bar{V}_h$ is the $L^2$-projection
into~$\bar{V}_h$,
\begin{equation}
  \sum_{K\in\mathcal{T}}\int_{\partial K}(\bar{\Pi}_hv - v) \cdot \bar{y}_h \dif s
  = 0 \qquad \forall \bar{y}_h\in \bar{V}_h.
\end{equation}
The following two inequalities will be used below to prove stability
of~$b_h$:
\begin{align}
  \label{eq:Pi_face_boundaryK}
  \norm{w - \Pi_hw}_{0,F} \le& ch_K^{1/2} \envert{w}_{1,K} &&
  \forall F\in\mathcal{F},\ F\subset\partial K,\ \forall K\in\mathcal{T}_h,
  \\
  \label{eq:Pi_barPi_inequality}
  \norm{\Pi_hw - \bar{\Pi}_hw}_{\partial K}^2 \le& h_K\norm{w}^2_{1,K} &&
  \forall K\in\mathcal{T}_h,
\end{align}
where $c > 0$ is independent of~$h$. The first inequality is due to
\cite[Lemma~1.59]{Pietro:book}, and the second is due
to~\cite[Proposition~3.9]{Cockburn:2011}.

We can now prove stability of the discrete pressure--velocity coupling
term, $b_h$, for an equal-order velocity/pressure approximation.
\begin{proof}[Proof of \cref{lem:stab_bh} for the equal-order case]
  For a $q_h \in Q_h$, from~\cref{eq:stab_b}, there exists a $v_{q_h}
  \in \sbr{H_0^1(\Omega)}^d$ such that $\nabla \cdot v_{q_h} = q_h$
  and $\beta_c \norm{v_{q_h}}_{1, \Omega} \le \norm{q_h}_{0,
    \Omega}$. Since $v_{q_h}\in \sbr{H^1_0(\Omega)}^d$ we see that
  \begin{multline}
    \norm{q_h}^2_{0,\Omega} = \int_{\Omega} q_h^2 \dif x
    = \int_{\Omega}q_h\nabla\cdot v_{q_h}\dif x
\\
    = -\sum_{K\in\mathcal{T}}\int_K\nabla q_h\cdot v_{q_h}\dif x
    + \sum_{K\in\mathcal{T}}\int_{\partial K}q_hv_{q_h}\cdot n \dif s.
  \end{multline}
  By~\cref{eq:Piprojection_K}, we note that $\sum_{K\in\mathcal{T}}
  \int_{K} \nabla q_h \cdot (\Pi_hv_{q_h} - v_{q_h}) \dif x = 0$, and
  it follows that
  \begin{equation}
    \label{eq:intbypartsnormqh_Pi}
    \norm{q_h}^2_{0,\Omega}
    = -\sum_{K\in\mathcal{T}}\int_K\nabla q_h\cdot \Pi_hv_{q_h}\dif x
    + \sum_{K\in\mathcal{T}}\int_{\partial K}q_hv_{q_h}\cdot n\dif s.
  \end{equation}

  Next, we note for ${\bf q}_{h} = (q_{h}, \bar{q}_{h}) \in
  Q^{*}_{h}$, from the definition of $b_{h}$ in~\cref{eq:formB} and
  applying integration by parts we have
  \begin{equation}
    \begin{split}
    b_h({\bf q}_h, (\Pi_hv_{q_h},\bar{\Pi}_hv_{q_h})) = &
    \int_{\Omega}\nabla q_h  \cdot \Pi_hv_{q_h}\dif x
    + \sum_{K\in\mathcal{T}}\int_{\partial K}\sbr{(\Pi_hv_{q_h} - \bar{\Pi}_hv_{q_h}) \cdot n
      \bar{q}_h - \Pi_hv_{q_h}\cdot n q_h} \dif s
    \\
    = &
    \int_{\Omega}\nabla q_h  \cdot \Pi_hv_{q_h}\dif x
    + \sum_{K\in\mathcal{T}}\int_{\partial K}(\bar{q}_h-q_h)\Pi_hv_{q_h}\cdot n \dif s,
    \end{split}
  \end{equation}
  where the last equality is due to the single-valuedness of
  $\bar{\Pi}_h v_{q_h}$ and $\bar{q}_h$ across element boundaries and
  because $\bar{\Pi}_h v_{q_h} = 0$ on the domain
  boundary~$\Gamma$. We may now write~\cref{eq:intbypartsnormqh_Pi}
  as
  \begin{equation}
    \begin{split}
    \norm{q_h}^2_{0, \Omega}
    =& -b_h({\bf q}_h, (\Pi_hv_{q_h},\bar{\Pi}_hv_{q_h}))
    + \sum_{K \in\mathcal{T}}\int_{\partial K}(\bar{q}_h-q_h)\Pi_h v_{q_h} \cdot n\dif s
    \\
    &- \sum_{K\in\mathcal{T}}\int_{\partial K}(\bar{q}_h-q_h)v_{q_h}\cdot n\dif s,
    \end{split}
  \end{equation}
  where equality is due to the single-valuedness of $v_{q_h}$ and
  $\bar{q}_h$ across element boundaries and because $v_{q_h} = 0$ on
  the domain boundary~$\Gamma$. It follows then that
  \begin{equation}
    \label{eq:qhnormT1T2}
    \norm{q_h}^2_{0,\Omega}
    =
    \underbrace{-b_h({\bf q}_h, (\Pi_hv_{q_h},\bar{\Pi}_hv_{q_h}))}_{T_{1}}
    +
    \underbrace{\sum_{K\in\mathcal{T}}\int_{\partial K}(\bar{q}_h-q_h)(\Pi_hv_{q_h}-v_{q_h})\cdot n\dif s}_{T_{2}}.
  \end{equation}
  We now bound $T_1$ and $T_2$ separately. Starting with~$T_1$,
  \begin{equation}
    \envert{T_1}
    \le \del{
      \sup_{{\bf w}_h\in V_h^{\star}}
      \frac{b_h({\bf q}_h, {\bf w}_h)}
      {\tnorm{{\bf w}_h}_v}} \tnorm{(\Pi_hv_{q_h},\bar{\Pi}_hv_{q_h})}_v.
  \end{equation}
  Noting that
  \begin{equation}
    \tnorm{(\Pi_hv_{q_h},\bar{\Pi}_hv_{q_h})}_v^2
    = \sum_{K\in\mathcal{T}}\norm{\nabla(\Pi_hv_{q_h})}^2_{0,K} +
    \sum_{K\in\mathcal{T}}\frac{\alpha_{v}}{h_K}\norm{\bar{\Pi}_hv_{q_h} - \Pi_hv_{q_h}}^2_{0,\partial K},
  \end{equation}
  it is possible to bound the two terms on the right separately. By
  \cite[Lemma~6.11]{Pietro:book} and~\cref{eq:Pi_barPi_inequality}
  we find, respectively,
  \begin{equation}
    \begin{split}
      \sum_{K\in\mathcal{T}}\norm{\nabla(\Pi_hv_{q_h})}^2_{0,K} \le& c \norm{v_{q_h}}^2_{1,\Omega},
      \\
      \sum_{K\in\mathcal{T}}\frac{\alpha_v}{h_K}\norm{\bar{\Pi}_hv_{q_h} - \Pi_hv_{q_h}}^2_{0,\partial K}
      \le& c\alpha_v\norm{v_{q_h}}^2_{1,\Omega},
    \end{split}
  \end{equation}
  with $c > 0$ independent of~$h$. Using~\cref{eq:stab_b} it follows
  then that
  \begin{equation}
    \label{eq:bound_for_T1}
    \envert{T_1}
    \le c(1 + \alpha_v) \beta_c^{-1}
    \del{
      \sup_{{\bf w}_h\in V_h^{\star}}
      \frac{b_h({\bf q}_h, {\bf w}_h)}
      {\tnorm{{\bf w}_h}_v}} \norm{q_h}_{0,\Omega}.
  \end{equation}
  We now bound~$T_2$. Using the Cauchy--Schwarz inequality,
  \begin{equation}
    \begin{split}
      \envert{T_2}
      &\le
      \del{\sum_{K\in\mathcal{T}}\int_{\partial K}h_K(\bar{q}_h-q_h)^2 \dif s}^{1/2}
      \del{\sum_{K\in\mathcal{T}} \int_{\partial K} h_K^{-1}
      \envert{\Pi_hv_{q_h}-v_{q_h}}^2\dif s}^{1/2}
      \\
      &\le \alpha_p^{-1} \envert{{\bf q}_h}_p
      \del{\sum_{K\in\mathcal{T}}h_K^{-1}\norm{\Pi_hv_{q_h} - v_{q_h}}^2_{0, \partial K}}^{1/2},
    \end{split}
  \end{equation}
  where the last inequality follows from the definition of the
  pressure semi-norm in~\cref{eq:pressureseminorm}. Note that
  by~\cref{eq:Pi_face_boundaryK}
  \begin{equation}
    \norm{\Pi_hv_{q_h}-v_{q_h}}^2_{0,\partial K}
    \le \sum_{F\in\mathcal{F},\ F\subset\partial K}\norm{\Pi_hv_{q_h}-v_{q_h}}^2_{0,F}
    \le c h_K \envert{v_{q_h}}^2_{1,K},
  \end{equation}
  where $c > 0$ is independent of~$h$. Using again~\cref{eq:stab_b}
  we find the following bound for~$T_2$,
  \begin{equation}
    \label{eq:bound_for_T2}
    \envert{T_2}
    \le  c\alpha_p^{-1} \envert{{\bf q}_h}_p\norm{v_{q_h}}_{1,\Omega}
    \le c(\alpha_p\beta_c)^{-1} \envert{{\bf q}_h}_p\norm{q_h}_{0,\Omega}.
  \end{equation}

  Combining now~\cref{eq:qhnormT1T2}, \cref{eq:bound_for_T1}
  and~\cref{eq:bound_for_T2}, we find
  \begin{equation}
    \beta_c\norm{q_h}^2_{0,\Omega} \le
    c(1+\alpha_v)\del{
      \sup_{{\bf w}_h\in V_h^{\star}}
      \frac{b_h({\bf q}_h, {\bf w}_h)}
      {\tnorm{{\bf w}_h}_v}} \norm{q_h}_{0,\Omega}
    + c\alpha_p^{-1} \envert{{\bf q}_h}_p\norm{q_h}_{0,\Omega}.
  \end{equation}
  Dividing both sides by $\norm{q_h}_{0,\Omega}$ and rearranging terms
  the bound in~\cref{eq:stab_bh} follows with
  \begin{equation}
    \beta_p = \frac{c\beta_c}{\max(1+\alpha_v,\alpha_p^{-1})}.
  \end{equation}
\end{proof}
Note that $\beta_{p} \rightarrow 0$ as $\alpha_{p} \rightarrow 0$,
illustrating the need for the penalty term on the pressure jump.

\subsubsection{Mixed-order case}
\label{ss:mixedorderinfsup}

To prove \cref{lem:stab_bh} for a mixed-order velocity--pressure
approximation, we follow a similar approach to \cite{Hansbo:2002}. For
this we require the definition of the BDM interpolation operator, as
given in the following lemma~\cite[Lemma~7]{Hansbo:2002}. See
\cite{schotzau:2002} for the analogous operators on quadrilaterals and
hexahedra.

\begin{lemma}
  \label{lem:BDMprojection}
  If the mesh consists of triangles in two dimensions or tetrahedra in
  three dimensions there is an interpolation operator $\Pi_{\rm BDM} :
  [H^1(\Omega)]^d \to V_h$ with the following properties for all $u\in
  [H^{k+1}(K)]^d$:
  \begin{itemize}
  \item[(i)] $\jump{n\cdot\Pi_{\rm BDM} u} = 0$, where $\jump{a} =
    a^{+} +a^{-}$ and $\jump{a}=a$ on, respectively, interior and
    boundary faces is the usual jump operator.
  \item[(ii)] $\norm{u-\Pi_{\rm BDM} u}_{m,K} \le ch_K^{l - m}
    \norm{u}_{l,K}$ with $m = 0, 1, 2$ and $m\le l \le k+1$.
  \item[(iii)] $\norm{\nabla\cdot(u - \Pi_{\rm BDM} u)}_{m, K} \le c
    h_K^{l - m} \norm{\nabla\cdot u}_{l, K}$ with $m = 0, 1$ and $m
    \le l \le k$.
  \item[(iv)] $\int_K q(\nabla\cdot u - \nabla\cdot\Pi_{\rm BDM}u)
    \dif x = 0$ for all $q \in P_{k - 1}(K)$.
  \item[(v)] $\int_F \bar{q}(n\cdot u - n\cdot \Pi_{\rm BDM} u) \dif s
    = 0$ for all $\bar{q}\in P_k(F)$, where $F$ is a face on~$\partial
    K$.
  \end{itemize}
\end{lemma}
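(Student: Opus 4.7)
Since this lemma is attributed to \cite[Lemma~7]{Hansbo:2002} (with the tensor-product analogue in \cite{schotzau:2002}), my plan is essentially to defer to the standard construction and verify that each of the five items follows directly from the defining moments of the BDM element. I would first recall that on each simplex $K$ with face $F \subset \partial K$, the interpolant $\Pi_{\rm BDM} u \in [P_k(K)]^d$ is uniquely determined by the face moments
\begin{equation*}
\int_F (n \cdot \Pi_{\rm BDM} u)\, r \dif s = \int_F (n \cdot u)\, r \dif s \qquad \forall r \in P_k(F), \ F \subset \partial K,
\end{equation*}
together with a standard set of interior moments that tests against $\nabla P_{k-1}(K)$ and, for $k \ge 2$, a Nedelec-type bubble complement. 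With these degrees of freedom in place, property (v) is just the defining condition, and property (i) follows because on an interior face $F$ the polynomial $n \cdot \Pi_{\rm BDM} u|_F \in P_k(F)$ is characterised from either side by the same moments against all of $P_k(F)$, hence coincides with the $L^2(F)$-projection of $n \cdot u$ onto $P_k(F)$ from both sides and is therefore single-valued.

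For property (ii) I would use the standard scaling/Bramble--Hilbert argument: on a reference simplex $\hat{K}$ the interpolant preserves $[P_k(\hat{K})]^d$ and is bounded on $[H^1(\hat{K})]^d$, and affine equivalence together with the Deny--Lions lemma then yields the $h_K^{l-m}$ factors for $0 \le m \le l \le k+1$.

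Property (iv) I would establish by integration by parts: for $q \in P_{k-1}(K)$,
\begin{equation*}
\int_K q\, \nabla \cdot (u - \Pi_{\rm BDM} u) \dif x = -\int_K \nabla q \cdot (u - \Pi_{\rm BDM} u) \dif x + \int_{\partial K} q\, (u - \Pi_{\rm BDM} u) \cdot n \dif s,
\end{equation*}
where the volume integral vanishes because $\nabla q \in \nabla P_{k-1}(K)$ is among the interior test functions defining $\Pi_{\rm BDM}$, and the boundary integral vanishes because $q|_F \in P_{k-1}(F) \subset P_k(F)$ is covered by (v). From (iv) the commuting-diagram identity $\nabla \cdot \Pi_{\rm BDM} u = P_{k-1}^K (\nabla \cdot u)$ is immediate (since $\nabla \cdot \Pi_{\rm BDM} u \in P_{k-1}(K)$ automatically), and (iii) then reduces to the classical $L^2$-projection error estimate applied to $\nabla \cdot u$ on~$K$.

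The only mildly delicate point I foresee is bookkeeping: one has to check that the interior moments actually span (a complement to the face moments modulo) $\nabla P_{k-1}(K)$ in the right dimension, which is precisely where the choice of BDM rather than BDFM/Raviart--Thomas degrees of freedom matters. Once that dimension count is in place, every item in (i)--(v) is a direct consequence of the defining moments, which is why the statement can reasonably be imported wholesale from \cite{Hansbo:2002,schotzau:2002} without reproving it here.
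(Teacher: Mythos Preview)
The paper does not prove this lemma at all; it is stated with attribution to \cite[Lemma~7]{Hansbo:2002} (and \cite{schotzau:2002} for tensor-product elements) and then used directly in the inf-sup proof for the mixed-order case. Your sketch is a correct outline of the standard argument and is therefore strictly more than the paper offers: (v) is the defining face moment, (i) follows from (v) by unisolvence of $P_k(F)$, (iv) from integration by parts against the interior moments plus (v), (iii) from the resulting commuting-diagram identity $\nabla\cdot\Pi_{\rm BDM} = P_{k-1}^K\nabla\cdot$, and (ii) from the usual Bramble--Hilbert/scaling argument. Nothing in your outline is wrong; it simply reproduces the proof that the paper chose to import by citation.
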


We can now prove stability of the discrete pressure--velocity coupling
term, $b_h$, for a mixed-order velocity/pressure approximation.
\begin{proof}[Proof of \cref{lem:stab_bh} for the mixed-order case]
  Let $q_h \in Q_h$. By~\cref{eq:stab_b} there exists a
  $v_{q_h}\in \sbr{H_0^1(\Omega)}^d$ such that $\nabla \cdot v_{q_h} =
  q_h$ and $\beta_c \norm{v_{q_h}}_{1, \Omega} \le \norm{q_h}_{0,
    \Omega}$. Since $v_{q_h} \in \sbr{H^1_0(\Omega)}^d$ we see that
  \begin{equation}
    \label{eq:normqhinBDMbh}
    \norm{q_h}^2_{0,\Omega}
    = \int_{\Omega}q_h\nabla\cdot v_{q_h}\dif x
    = \int_{\Omega}q_h\nabla\cdot \Pi_{\rm BDM}v_{q_h}\dif x
    = -b_h({\bf q}_h, (\Pi_{\rm BDM}v_{q_h}, \bar{\Pi}_hv_{q_h})),
  \end{equation}
  by $(iv)$ of \cref{lem:BDMprojection} and by definition of $b_h$
  in~\cref{eq:formB}. Next, we determine a bound for
  $\tnorm{(\Pi_{\rm BDM}v_{q_h}, \bar{\Pi}_h v_{q_h})}_v$. We first
  note that
  \begin{equation}
    \label{eq:elementbdmbound}
    \norm{\nabla(\Pi_{\rm BDM}v_{q_h})}_{0, K}
    \le
    \norm{\nabla v_{q_h} - \nabla(\Pi_{\rm BDM} v_{q_h})}_{0, K} +
    \norm{\nabla v_{q_h}}_{0,K}
    \le
    c \norm{v_{q_h}}_{1, K},
  \end{equation}
  due to $(ii)$ of \cref{lem:BDMprojection}. We also note that
  \begin{multline}
    \label{eq:boundingbdmboundary}
    h_K^{-1}\norm{\Pi_{\rm BDM}v_{q_h}-\bar{\Pi}_hv_{q_h}}^2_{0,\partial K}
    \le
    h_K^{-1}\norm{\Pi_{\rm BDM}v_{q_h}-v_{q_h}}^2_{0,\partial K}
    \\
    + h_K^{-1}\norm{\bar{\Pi}_hv_{q_h} - \Pi_hv_{q_h}}^2_{0,\partial K} +
    h_K^{-1}\norm{\Pi_hv_{q_h}-v_{q_h}}^2_{0,\partial K}.
  \end{multline}
  By the trace inequality~\cref{eq:trace_inequality}, item $(ii)$ of
  \cref{lem:BDMprojection}, and the
  inequalities~\cref{eq:Pi_face_boundaryK}
  and~\cref{eq:Pi_barPi_inequality},
  \begin{equation}
    \begin{split}
      h_K^{-1}\norm{\Pi_{\rm BDM}v_{q_h}-v_{q_h}}^2_{0,\partial K}
      &\le
      c h_K^{-2}\norm{\Pi_{\rm BDM}v_{q_h}-v_{q_h}}^2_{0, K}
      \le c\norm{v_{q_h}}^2_{1,K},
      \\
      h_K^{-1}\norm{\Pi_hv_{q_h}-v_{q_h}}^2_{0,\partial K}
      &\le c \envert{v_{q_h}}^2_{1,K},
      \\
      h_K^{-1}\norm{\bar{\Pi}_hv_{q_h} - \Pi_hv_{q_h}}^2_{0,\partial K}
        &\le \norm{v_{q_h}}^2_{1,K}.
    \end{split}
  \end{equation}
  Combining these results with~\cref{eq:boundingbdmboundary}, we obtain
  \begin{equation}
    \label{eq:boundingbdmboundary_final}
    \frac{\alpha_v}{h_K}\norm{\Pi_{\rm  BDM}v_{q_h}-\bar{\Pi}_hv_{q_h}}^2_{0,\partial K} \le
    c \alpha_v \norm{v_{q_h}}^2_{1,K}.
  \end{equation}
  From~\cref{eq:elementbdmbound}
  and~\cref{eq:boundingbdmboundary_final} we therefore find that
  \begin{equation}
    \label{eq:tnormbdmfinalbound}
    \tnorm{(\Pi_{\rm BDM}v_{q_h}, \bar{\Pi}_hv_{q_h})}_v^2 \le c(1+\alpha_v)\norm{v_{q_h}}^2_{1,K}.
  \end{equation}
  Satisfaction of the inf-sup condition follows from
  \begin{equation}
    \label{eq:fininfsupql2}
    \sup_{{\bf w}_h\in V_h^{\star}}
    \frac{ -b_h({\bf q}_h, {\bf w}_h) }{\tnorm{ {\bf w}_h }_v}
    \ge
    \frac{ -b_h({\bf q}_h, (\Pi_{\rm BDM}v_{q_h}, \bar{\Pi}_hv_{q_h})) }{\tnorm{ (\Pi_{\rm BDM}v_{q_h}, \bar{\Pi}_hv_{q_h}) }_v}
    \ge
    \frac{c\beta_c}{1+\alpha_v}\norm{q_h}_{0,\Omega},
  \end{equation}
  where we have
  used~\cref{eq:normqhinBDMbh},~\cref{eq:tnormbdmfinalbound}
  and~\cref{eq:stab_b} for the second inequality. The bound
  in~\cref{eq:stab_bh} follows with
  $\beta_p = c\beta_c/(1+\alpha_v)$.
\end{proof}
Note that the analysis for the mixed order case does not depend on the
pressure penalty term~$\alpha_{p}$, which can be set to zero.

By \cref{lem:stab_ah,lem:stab_bh}, it straightforward to show that a
solution $({\bf u}_h, p_h)$ to the $H(\rm div)$-type formulation in
\cref{eq:bdm_compact_wf} is unique. However, for the case $\alpha_{p}
= 0$, \cref{lem:stab_bh} does not involve any norms of $\Bar{q}_{h}
\in \Bar{Q}_{h}$. The following proposition shows that
\cref{lem:stab_bh} is sufficient for the formulation in
\cref{eq:compact_wf}.

\begin{proposition}
  \label{prop:bdm_hyb_solution}
  If $\alpha_v > \alpha_0$, a solution $({\bf u}_h, {\bf p}_h) \in
  X_{h}^{\star}$ to \cref{eq:compact_wf} is unique.
  \end{proposition}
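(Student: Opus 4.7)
The problem is linear and finite-dimensional, so uniqueness reduces to showing that the homogeneous problem ($f=0$) admits only the trivial solution. My plan is to peel off the unknowns in stages: first velocity, then cell pressure, and finally the facet pressure, which is the delicate piece.

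\emph{Step 1 (velocity vanishes).} Take test function $({\bf v}_h, {\bf q}_h) = ({\bf u}_h, {\bf p}_h)$ in \cref{eq:compact_wf} with $f = 0$. Since $b_h({\bf p}_h, {\bf u}_h) - b_h({\bf p}_h, {\bf u}_h) = 0$, what remains is $a_h({\bf u}_h, {\bf u}_h) + c_h({\bf p}_h, {\bf p}_h) = 0$. Both terms are non-negative (the first by \cref{lem:stab_ah}, since $\alpha_v > \alpha_0$, and the second by inspection of \cref{eq:formC}), so each vanishes. \cref{lem:stab_ah} then yields $\tnorm{{\bf u}_h}_v = 0$, which forces $\nabla u_h = 0$ on every cell and $u_h = \bar{u}_h$ on every cell boundary. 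Since $\bar{u}_h$ is single-valued on facets and vanishes on $\Gamma$, a standard connectivity walk through neighboring cells gives $u_h = 0$ and $\bar{u}_h = 0$.

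\emph{Step 2 (cell pressure vanishes).} With ${\bf u}_h = 0$, \cref{eq:compact_wf} reduces to $b_h({\bf p}_h, {\bf v}_h) = 0$ for all ${\bf v}_h \in V_h^{\star}$. In the equal-order case, Step 1 additionally gave $c_h({\bf p}_h, {\bf p}_h) = 0$, hence $\envert{{\bf p}_h}_p = 0$; in the mixed-order case $\envert{{\bf p}_h}_p = 0$ holds trivially because $\alpha_p = 0$. Either way, \cref{lem:stab_bh} yields $\beta_p \norm{p_h}_{0,\Omega} \le 0$, so $p_h = 0$.

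\emph{Step 3 (facet pressure vanishes).} This is the delicate point: \cref{lem:stab_bh} does not control $\bar{p}_h$, so we need a separate argument. Substituting $p_h = 0$ into the reduced identity and taking $\bar{v}_h = 0$ gives
\begin{equation*}
  \sum_{K\in\mathcal{T}} \int_{\partial K} v_h \cdot n\, \bar{p}_h \dif s = 0 \qquad \forall v_h \in V_h.
\end{equation*}
Since $V_h$ consists of fully discontinuous cell-wise polynomials, we may localize to a single cell $K$ and conclude $\int_{\partial K} v_h \cdot n\, \bar{p}_h \dif s = 0$ for every $v_h \in [P_k(K)]^d$. The normal-trace map $[P_k(K)]^d \to \prod_{F \subset \partial K} P_k(F)$ is surjective (it is precisely the BDM face trace, cf.\ \cref{eq:bdm_space}), so on each face $F$ we obtain $\int_F g\,\bar{p}_h \dif s = 0$ for all $g \in P_k(F)$. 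Choosing $g = \bar{p}_h|_F \in \bar{Q}_h|_F$ yields $\bar{p}_h|_F = 0$. Since $F$ was arbitrary, $\bar{p}_h \equiv 0$, completing the proof.

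The main obstacle is Step~3: because the inf-sup bound in \cref{lem:stab_bh} only provides $\norm{p_h}_{0,\Omega}$ (and, when $\alpha_p > 0$, the pressure jump semi-norm), the facet unknown $\bar{p}_h$ must be recovered from the momentum equation rather than the continuity equation. The surjectivity of the local normal-trace map on $[P_k(K)]^d$ is what makes the $\bar{V}_h$--$\bar{Q}_h$ pairing non-degenerate cell-by-cell, and this is where the polynomial degree choices for velocity and pressure traces in \cref{eqn:spaces_facet} enter decisively.
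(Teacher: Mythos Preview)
Your proof is correct and takes a genuinely different route from the paper's. Both arguments share Step~1 (coercivity kills the velocity), but diverge on the pressure.

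The paper does not invoke \cref{lem:stab_bh}. Instead, after reaching \cref{eq:bhdomain} it integrates by parts on a single cell and constructs a test function $w_h \in [P_k(K)]^d$ via the full BDM degrees of freedom---face moments against $P_k(\partial K)$ \emph{and} interior moments against the N\'ed\'elec space $\mathcal{N}_{k-2}(K)$---chosen so that the volume term $\int_K \nabla p_h \cdot w_h$ vanishes and the boundary term collapses to $\int_{\partial K}(\bar p_h - p_h)^2$. This yields $p_h = \bar p_h$ on facets; then $\nabla p_h = 0$ cell-wise, single-valuedness of $\bar p_h$ makes $p_h$ a global constant, and the zero-mean constraint finishes.

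Your route is more modular: you reuse \cref{lem:stab_bh} (together with $\envert{{\bf p}_h}_p = 0$ from Step~1) to kill $p_h$ outright, which removes the volume term entirely. Step~3 then needs only the surjectivity of the face normal-trace map $[P_k(K)]^d \to \prod_{F\subset\partial K} P_k(F)$, a strictly weaker fact than the full BDM unisolvence the paper invokes. The trade-off is that your Step~2 inherits the zero-mean hypothesis implicitly used in the proof of \cref{lem:stab_bh}, whereas the paper's argument makes that dependence explicit only at the very end. Both proofs are, as written, specific to simplices.
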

\begin{proof}
  We wish to show that for zero data ${\bf u}_h = \boldsymbol{0}$ and
  ${\bf p}_h = \boldsymbol{0}$.  Setting ${\bf v}_h = {\bf u}_h$ and
  ${\bf q}_h = {\bf p}_h$ in \cref{eq:compact_wf}, coercivity of
  $a_h(\cdot, \cdot)$ implies that ${\bf u}_h = \boldsymbol{0}$.

  Substituting ${\bf u}_h = \boldsymbol{0}$ into \cref{eq:compact_wf}
  results in
  \begin{equation}
    \label{eq:bhdomain}
    -\sum_{K\in\mathcal{T}}\int_K p_h\nabla\cdot v_h \dif x +
    \sum_{K\in\mathcal{T}}\int_{\partial K}v_h\cdot n \bar{p}_h \dif s = 0 \qquad \forall v_h \in V_{h}.
  \end{equation}
  Setting $v_h = 0$ on all elements except $K$, and integrating by
  parts,
  \begin{equation}
    \label{eq:bhK}
    \int_K \nabla p_h \cdot v_h \dif x +
    \int_{\partial K}v_h\cdot n \del{\bar{p}_h - p_h} \dif s = 0
    \qquad \forall v_h \in \sbr{P_k(K)}^d.
  \end{equation}
  We consider a $w_h \in \sbr{P_k(K)}^3$ that satisfies
  \begin{subequations}
    \begin{align}
       \label{eq:wh_partialK}
      \int_{\partial K} w_h\cdot n \bar{r}_h \dif s
      &= \int_{\partial K} \del{p_h - \bar{p}_h} \bar{r}_h \dif s
      \quad&& \forall \bar{r}_h \in P_k(\partial K),
      \\
      \label{eq:wh_K}
      \int_K w_h \cdot z_h \dif x
      &= 0 \quad&& \forall z_h \in \mathcal{N}_{k-2}(K),
     \end{align}
    \label{eq:constructwh}
  \end{subequations}
  where $\mathcal{N}_{k-2}$ is the N\'ed\'elec
  space~\cite{Boffi:book}. We remark that such a $w_h$ exists and is
  unique~\cite[Proposition 2.3.2]{Boffi:book}. Using $w_h$ as test
  function in \cref{eq:bhK},
  \begin{equation}
    \label{eq:bhK_wh}
    0 = \int_K \nabla p_h \cdot w_h \dif x +
    \int_{\partial K}w_h\cdot n \del{\bar{p}_h - p_h} \dif s
    = \int_{\partial K} \del{\bar{p}_h - p_h}^2 \dif s,
  \end{equation}
  where we used that $\nabla p_h \in \nabla P_{k-1}(K) \subset
  \sbr{P_{k-2}(K)}^3 \subset \mathcal{N}_{k-2}(K)$, and set $\bar{r}_h
  = \bar{p}_h - p_h \in P_k(\partial K)$. \Cref{eq:bhK_wh} implies
  that $p_h = \bar{p}_h$ on $\partial K$.  Using $p_{h} = \Bar{p}_{h}$
  on facets in \cref{eq:bhK} shows that the pressure $p_{h}$ is
  defined only up to a constant.  Since $p_{h} = \Bar{p}_{h}$, and
  considering that $\Bar{p}_{h}$ is single-valued on facets, together
  with $\int_{\Omega}p_h \dif x = 0$ in \cref{eq:stokes_d} we obtain
  that~${\bf p}_h = \boldsymbol{0}$.
\end{proof}

We have proved existence and uniqueness of a solution to
\cref{eq:compact_wf} for simplex elements (the degrees-of-freedom in
\cref{eq:constructwh} are specific to simplices). For other elements,
the analogous degree-of-freedom definitions can be found in
\cite[Chapter 2]{Boffi:book}.

\subsection{Well-posedness and boundedness}

We are now ready to prove inf-sup stability and boundedness of the
method. The analysis for the equal- and mixed-order case is identical,
building on \cref{lem:stab_bh}. We first prove satisfaction of
the discrete inf-sup condition.
\begin{lemma}[Discrete inf-sup stability]
  \label{lem:stab_isfem}
  If $\alpha_{v} > \alpha_0$, with $\alpha_0$ defined in
  \cref{lem:stab_ah}, then there exists a constant $\sigma > 0$,
  independent of $h$, such that for all~$({\bf v}_h,{\bf q}_h)\in
  X_h^{\star}$
  \begin{equation}
    \sigma\tnorm{({\bf v}_h,{\bf q}_h)}_{v,p}
    \le
    \sup_{({\bf w}_h,{\bf r}_h)\in X_h^{\star}}
    \frac{B_h(({\bf v}_h,{\bf q}_h),({\bf w}_h,{\bf r}_h))}
    {\tnorm{({\bf w}_h,{\bf r}_h)}_{v,p}}.
  \end{equation}
\end{lemma}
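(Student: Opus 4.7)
The proof proposal follows the classical strategy for saddle-point problems: combine the coercivity of $a_h$ (\cref{lem:stab_ah}) and the pressure--velocity inf-sup condition (\cref{lem:stab_bh}) via a suitable linear combination of test functions. Given $({\bf v}_h, {\bf q}_h) \in X_h^{\star}$, the plan is to construct $({\bf w}_h, {\bf r}_h) \in X_h^{\star}$ such that $B_h(({\bf v}_h, {\bf q}_h), ({\bf w}_h, {\bf r}_h))$ controls $\tnorm{({\bf v}_h, {\bf q}_h)}_{v,p}^2$ from below while $\tnorm{({\bf w}_h, {\bf r}_h)}_{v,p}$ is controlled by $\tnorm{({\bf v}_h, {\bf q}_h)}_{v,p}$ from above.

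First, testing with $({\bf v}_h, {\bf q}_h)$ itself makes the two $b_h$ contributions in $B_h$ cancel, so that
\begin{equation*}
B_h(({\bf v}_h, {\bf q}_h), ({\bf v}_h, {\bf q}_h)) = a_h({\bf v}_h, {\bf v}_h) + c_h({\bf q}_h, {\bf q}_h) \ge \beta_v \tnorm{{\bf v}_h}_v^2 + \envert{{\bf q}_h}_p^2
\end{equation*}
by \cref{lem:stab_ah} and the definition of the pressure semi-norm. This controls everything in $\tnorm{({\bf v}_h, {\bf q}_h)}_{v,p}^2$ except $\norm{q_h}_{0,\Omega}^2$.

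To recover control on $\norm{q_h}_{0,\Omega}$, I would invoke \cref{lem:stab_bh} to produce, for a fixed choice of sign, a $\tilde{\bf w}_h \in V_h^{\star}$ with $\tnorm{\tilde{\bf w}_h}_v = \norm{q_h}_{0,\Omega}$ satisfying $b_h({\bf q}_h, \tilde{\bf w}_h) \ge (\beta_p \norm{q_h}_{0,\Omega} - \envert{{\bf q}_h}_p)\norm{q_h}_{0,\Omega}$. Then I would take the combined test function $({\bf w}_h, {\bf r}_h) = ({\bf v}_h + \delta \tilde{\bf w}_h, {\bf q}_h)$ for a small $\delta > 0$ to be fixed at the end. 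The $b_h$ cross terms still cancel for the ${\bf v}_h$ contribution, leaving
\begin{equation*}
B_h(({\bf v}_h, {\bf q}_h), ({\bf w}_h, {\bf r}_h)) = a_h({\bf v}_h, {\bf v}_h) + \delta\, a_h({\bf v}_h, \tilde{\bf w}_h) + \delta\, b_h({\bf q}_h, \tilde{\bf w}_h) + c_h({\bf q}_h, {\bf q}_h).
\end{equation*}
Applying \cref{lem:stab_ah,lem:bound_ah} (using norm equivalence \cref{eq:equivalentNorms_v_vprime} to convert $\tnorm{{\bf v}_h}_{v'}$ into $\tnorm{{\bf v}_h}_v$, since ${\bf v}_h \in V_h^{\star}$), together with the $\tilde{\bf w}_h$ bound, yields
\begin{equation*}
B_h \ge \beta_v \tnorm{{\bf v}_h}_v^2 - \delta C_a' \tnorm{{\bf v}_h}_v \norm{q_h}_{0,\Omega} + \delta \beta_p \norm{q_h}_{0,\Omega}^2 - \delta \envert{{\bf q}_h}_p \norm{q_h}_{0,\Omega} + \envert{{\bf q}_h}_p^2.
\end{equation*}

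The main technical step, and the part requiring the most care, is to close the argument by Young's inequality on the two indefinite cross terms. I would absorb $\delta C_a' \tnorm{{\bf v}_h}_v \norm{q_h}_{0,\Omega}$ partly into $\beta_v \tnorm{{\bf v}_h}_v^2$ and partly into $\delta \beta_p \norm{q_h}_{0,\Omega}^2$, and handle $\delta \envert{{\bf q}_h}_p \norm{q_h}_{0,\Omega}$ analogously, splitting between $\envert{{\bf q}_h}_p^2$ and $\norm{q_h}_{0,\Omega}^2$. Choosing $\delta$ sufficiently small (depending on $\beta_v$, $\beta_p$, $C_a$) then yields $B_h \ge \sigma' \tnorm{({\bf v}_h, {\bf q}_h)}_{v,p}^2$ with $\sigma' > 0$. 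Finally, the triangle inequality gives $\tnorm{({\bf w}_h, {\bf r}_h)}_{v,p} \le \tnorm{{\bf v}_h}_v + \delta \norm{q_h}_{0,\Omega} + \norm{q_h}_{0,\Omega} + \envert{{\bf q}_h}_p \le C \tnorm{({\bf v}_h, {\bf q}_h)}_{v,p}$, so dividing produces the desired inf-sup bound with $\sigma = \sigma'/C$, which is independent of $h$ since all constants involved are.
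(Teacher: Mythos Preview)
Your proof is correct but takes a genuinely different route from the paper's. The paper never constructs an explicit test function; instead it works entirely with the supremum $S := \sup_{({\bf w}_h,{\bf r}_h)} B_h(({\bf v}_h,{\bf q}_h),({\bf w}_h,{\bf r}_h))/\tnorm{({\bf w}_h,{\bf r}_h)}_{v,p}$ and derives two separate inequalities: first $\beta_v\tnorm{{\bf v}_h}_v^2 + \envert{{\bf q}_h}_p^2 \le S\,\tnorm{({\bf v}_h,{\bf q}_h)}_{v,p}$ from coercivity, and second $\beta_p\norm{q_h}_{0,\Omega} \le c_{\alpha_v}\tnorm{{\bf v}_h}_v + S + \envert{{\bf q}_h}_p$ by writing $b_h({\bf q}_h,{\bf w}_h) = B_h(({\bf v}_h,{\bf q}_h),({\bf w}_h,{\bf 0})) - a_h({\bf v}_h,{\bf w}_h)$ and invoking \cref{lem:stab_bh} and \cref{lem:bound_ah}. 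These two bounds are then squared, Young'd, and combined with an $\epsilon$-weighting to produce the result. Your approach is the constructive ``explicit test function'' argument: extract a near-maximizer $\tilde{\bf w}_h$ from \cref{lem:stab_bh}, form $({\bf v}_h + \delta\tilde{\bf w}_h,{\bf q}_h)$, and absorb the cross terms. Both proofs rest on exactly the same three lemmas, so they are morally equivalent; yours is arguably more transparent and tracks the constants more directly, while the paper's avoids the (harmless, finite-dimensional) step of asserting that the supremum in \cref{lem:stab_bh} is actually attained.
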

\begin{proof}
  We first note that
  \begin{equation}
    B_h(({\bf v}_h,{\bf q}_h),({\bf v}_h,{\bf q}_h))
    = a_h({\bf v}_h, {\bf v}_h)
    + c_h({\bf q}_h, {\bf q}_h)
    \ge \beta_v\tnorm{{\bf v}_h}^2_v + \envert{{\bf q}_h}_p^2,
  \end{equation}
  by \cref{lem:stab_ah} and by definition of
  $c_h$~\cref{eq:formC} and the pressure
  semi-norm~\cref{eq:pressureseminorm}. Then,
  \begin{equation}
    \label{eq:betavvhCqh}
    \beta_v\tnorm{{\bf v}_h}^2_v + \envert{{\bf q}_h}_p^2
    \le
    \sup_{({\bf w}_h,{\bf r}_h)\in X_h^{\star}}
    \frac{B_h(({\bf v}_h,{\bf q}_h),({\bf w}_h,{\bf r}_h))}
    {\tnorm{({\bf w}_h,{\bf r}_h)}_{v,p}}
    \tnorm{({\bf v}_h,{\bf q}_h)}_{v,p}.
  \end{equation}
  It is clear that $b_h({\bf q}_h, {\bf w}_h) = B_h(({\bf v}_h, {\bf
    q}_h),({\bf w}_h,{\bf 0})) - a_h({\bf v}_h, {\bf w}_h)$, so using
  \cref{lem:stab_bh} we find that
  \begin{equation}
    \begin{split}
      \beta_p \norm{q_h}_{0,\Omega}
      &\le
      \sup_{{\bf w}_h\in V_h^{\star}} \del{
        \frac{ -a_h({\bf v}_h, {\bf w}_h) }{\tnorm{ {\bf w}_h }_v} +
    \frac{ B_h(({\bf v}_h, {\bf q}_h),({\bf w}_h,{\bf 0})) }{\tnorm{ ({\bf w}_h,{\bf 0}) }_{v,p}}}
      + \envert{{\bf q}_h}_p
      \\
      &\le
      \sup_{{\bf w}_h\in V_h^{\star}}
        \frac{ a_h({\bf v}_h, {\bf w}_h) }{\tnorm{ {\bf w}_h }_v}
        +
        \sup_{({\bf w}_h,{\bf r}_h)\in X_h^{\star}}
        \frac{B_h(({\bf v}_h,{\bf q}_h),({\bf w}_h,{\bf r}_h))}
        {\tnorm{({\bf w}_h,{\bf r}_h)}_{v,p}}
      + \envert{{\bf q}_h}_p.
    \end{split}
  \end{equation}
  Using the boundedness of~$a_h$ (\cref{lem:bound_ah})
  and~\cref{eq:equivalentNorms_v_vprime},
  \begin{equation}
    \sup_{{\bf w}_h\in V_h^{\star}}
    \frac{ a_h({\bf v}_h, {\bf w}_h) }{\tnorm{ {\bf w}_h }_v}
    \le cC_a(1+\alpha_v^{-1})\tnorm{ {\bf v}_h }_{v},
  \end{equation}
  where $c > 0$ is independent of~$h$. Let
  $c_{\alpha_v}=cC_a(1+\alpha_v^{-1})$. It follows then that
  \begin{equation}
    \label{eq:betapqhle}
    \beta_p \norm{q_h}_{0,\Omega} \le
      c_{\alpha_v}\tnorm{ {\bf v}_h }_{v}
      +
      \sup_{({\bf w}_h,{\bf r}_h)\in X_h^{\star}}
      \frac{B_h(({\bf v}_h,{\bf q}_h),({\bf w}_h,{\bf r}_h))}
      {\tnorm{({\bf w}_h,{\bf r}_h)}_{v,p}}
      + \envert{{\bf q}_h}_p.
  \end{equation}
  Applying Young's inequality twice, it follows that
  \begin{multline}
    \label{eq:betapqhleYoung}
    \beta_p^{2}\norm{q_h}^2_{0,\Omega} \le
    4\del{ c_{\alpha_v}^2\tnorm{ {\bf v}_h }_{v}^2 + \envert{{\bf q}_h}^2_p}
    \\
    + 2 \del{\sup_{({\bf w}_h,{\bf r}_h)\in X_h^{\star}}
    \frac{B_h(({\bf v}_h,{\bf q}_h),({\bf w}_h,{\bf r}_h))}
         {\tnorm{({\bf w}_h,{\bf r}_h)}_{v,p}}}^2.
  \end{multline}
  Applying Young's inequality now to~\cref{eq:betavvhCqh},
  \begin{multline}
    \label{eq:betavvhCqhY}
    \beta_v\tnorm{{\bf v}_h}^2_v + \envert{{\bf q}_h}_p^2
    \\
    \le
    \frac{1}{2\psi}\del{
      \sup_{({\bf w}_h,{\bf r}_h)\in X_h^{\star}}
      \frac{B_h(({\bf v}_h,{\bf q}_h),({\bf w}_h,{\bf r}_h))}
      {\tnorm{({\bf w}_h,{\bf r}_h)}_{v,p}}}^2
    + \frac{\psi}{2}\tnorm{({\bf v}_h,{\bf q}_h)}_{v,p}^2,
  \end{multline}
  with $\psi > 0$ a constant. Define $0< \epsilon <
  \min(1,\beta_vc_{\alpha_v}^{-2})/4$. Multiplying~\cref{eq:betapqhleYoung}
  by $\epsilon$ and adding to~\cref{eq:betavvhCqhY}, we obtain
  \begin{multline}
    \label{eq:allcombinedterms}
    \beta_v\tnorm{{\bf v}_h}^2_v + \envert{{\bf q}_h}_p^2
    + \epsilon\beta_p^2\norm{q_h}^2_{0,\Omega}
    \le
    4\epsilon c_{\alpha_v}^2\tnorm{ {\bf v}_h }_{v}^2
    + 4\epsilon \envert{{\bf q}_h}^2_p
    + \frac{\psi}{2}\tnorm{({\bf v}_h,{\bf q}_h)}_{v,p}^2
    \\
    + \del{\frac{1}{2\psi} + 2\epsilon}
    \del{\sup_{({\bf w}_h,{\bf r}_h)\in X_h^{\star}}
      \frac{B_h(({\bf v}_h,{\bf q}_h),({\bf w}_h,{\bf r}_h))}
      {\tnorm{({\bf w}_h,{\bf r}_h)}_{v,p}}}^2.
  \end{multline}
  Re-arranging, the result follows with
  \begin{equation}
    \sigma = \del{\frac{
      2 \min \cbr{\beta_v-4\epsilon c_{\alpha_v}^2, 1-4\epsilon,
        \epsilon\beta_p^2} - \psi}{\psi^{-1} + 4\epsilon}}^{1/2}.
  \end{equation}
\end{proof}
\Cref{lem:stab_isfem} proves that the discrete problem is
well-posed. We now show that the bilinear form $B_h$
in~\cref{eq:bilinearformStokes} is bounded.

\begin{lemma}[Boundedness]
  \label{lem:bounded_isfem}
  There exists a constant $C_B > 0$, independent of $h$, such that for
  all $({\bf v},{\bf q})\in X^{\star}(h)$ and $({\bf w}_h,{\bf
    r}_h)\in X_h^{\star}$
  \begin{equation}
    \label{eq:bounded_isfem}
    B_h(({\bf v},{\bf q}),({\bf w}_h,{\bf r}_h)) \le
    C_B \tnorm{({\bf v}, {\bf q})}_{v',p'}\tnorm{({\bf w}_h, {\bf r}_h)}_{v,p}.
  \end{equation}
\end{lemma}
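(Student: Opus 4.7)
The plan is to expand $B_h$ into its four constituent bilinear forms and bound each separately, matching the factors of the product norm on the right hand side of \cref{eq:bounded_isfem}. Writing
\begin{equation*}
 B_h(({\bf v},{\bf q}),({\bf w}_h,{\bf r}_h))
 = a_h({\bf v},{\bf w}_h) + b_h({\bf q},{\bf w}_h) - b_h({\bf r}_h,{\bf v}) + c_h({\bf q},{\bf r}_h),
\end{equation*}
the $a_h$ contribution is handled immediately by \cref{lem:bound_ah}, since $\tnorm{{\bf v}}_{v'}\le \tnorm{({\bf v},{\bf q})}_{v',p'}$ and $\tnorm{{\bf w}_h}_v\le \tnorm{({\bf w}_h,{\bf r}_h)}_{v,p}$. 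The $c_h$ contribution follows directly from Cauchy--Schwarz applied facet-wise and the definition of the pressure semi-norm~\cref{eq:pressureseminorm}, yielding $|c_h({\bf q},{\bf r}_h)|\le |{\bf q}|_p\,|{\bf r}_h|_p$.

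For $b_h({\bf q},{\bf w}_h)$ I would split into the volume integral $-\sum_K\int_K q\,\nabla\cdot w_h\,\dif x$ and the facet integral $\sum_K\int_{\partial K}(w_h-\bar{w}_h)\cdot n\,\bar{q}\,\dif s$. The volume integral is bounded by Cauchy--Schwarz and $\|\nabla\cdot w_h\|_{0,K}\le \sqrt{d}\,\|\nabla w_h\|_{0,K}$, giving a factor $\|q\|_{0,\Omega}\tnorm{{\bf w}_h}_v$. The facet integral is bounded via
\begin{equation*}
 \Bigl|\sum_K\int_{\partial K}(w_h-\bar{w}_h)\cdot n\,\bar{q}\,\dif s\Bigr|
 \le \alpha_v^{-1/2}\Bigl(\sum_K h_K\|\bar{q}\|_{0,\partial K}^2\Bigr)^{1/2}
 \Bigl(\sum_K \tfrac{\alpha_v}{h_K}\|w_h-\bar{w}_h\|_{0,\partial K}^2\Bigr)^{1/2},
\end{equation*}
which is exactly the pairing of the extra $h_K\|\bar{q}\|^2_{0,\partial K}$ term appearing in $\tnorm{\cdot}_{v',p'}$ with the penalty contribution to $\tnorm{\cdot}_v$.

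The off-diagonal term $b_h({\bf r}_h,{\bf v})$ is the main obstacle, because its facet piece $\sum_K\int_{\partial K}(v-\bar{v})\cdot n\,\bar{r}_h\,\dif s$ requires control of $\bar{r}_h$ on facets, which does not appear explicitly in the discrete norm $\tnorm{({\bf w}_h,{\bf r}_h)}_{v,p}$. My approach is to decompose $\bar{r}_h=r_h+(\bar{r}_h-r_h)$ and bound the two pieces separately: for the $(\bar{r}_h-r_h)$ part I use Cauchy--Schwarz with the penalty weights and identify $(\alpha_p h_K)^{1/2}\|\bar{r}_h-r_h\|_{0,\partial K}$ as the contribution to $|{\bf r}_h|_p$; for the $r_h$ part I apply the discrete trace inequality~\cref{eq:trace_inequality} to obtain $h_K^{1/2}\|r_h\|_{0,\partial K}\le C_t\|r_h\|_{0,K}$, and then pair with $(\alpha_v/h_K)^{1/2}\|v-\bar{v}\|_{0,\partial K}$ from $\tnorm{{\bf v}}_v$. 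The volume piece of $b_h({\bf r}_h,{\bf v})$ is handled as before, producing $\|r_h\|_{0,\Omega}\tnorm{{\bf v}}_{v'}$.

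Collecting all four estimates and using $\tnorm{{\bf v}}_{v'},\tnorm{{\bf v}}_v,\|q\|_{0,\Omega},\|\bar{q}\|_{\text{facet}},|{\bf q}|_p\le \tnorm{({\bf v},{\bf q})}_{v',p'}$ and similarly for the discrete norm on the second argument, I expect to arrive at~\cref{eq:bounded_isfem} with $C_B$ depending on $\alpha_v^{-1/2}$, $\alpha_p^{-1/2}$, the discrete trace constant $C_t$, and the dimension $d$, but independent of~$h$.
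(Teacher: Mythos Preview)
Your proposal is correct and follows essentially the same route as the paper: expand $B_h$ into its four pieces, invoke \cref{lem:bound_ah} for $a_h$, use Cauchy--Schwarz for $c_h$, pair the facet term in $b_h({\bf q},{\bf w}_h)$ with the extra $\sum_K h_K\|\bar q\|_{0,\partial K}^2$ contribution in $\tnorm{\cdot}_{v',p'}$, and---crucially---handle the facet term in $b_h({\bf r}_h,{\bf v})$ by splitting $\bar r_h = r_h + (\bar r_h - r_h)$, applying the discrete trace inequality~\cref{eq:trace_inequality} to the first piece and the pressure semi-norm to the second. The only cosmetic difference is that you track the $\alpha_p^{-1/2}$ dependence of $C_B$ explicitly, whereas the paper absorbs it into the generic constant and records only $C_B = c(1+\alpha_v^{-1/2})$.
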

\begin{proof}
  Let $({\bf v},{\bf q})\in X^{\star}(h)$ and~$({\bf w}_h,{\bf r}_h)
  \in X_h^{\star}$. From the definition of~$B_h$,
  \begin{equation}
    B_h(({\bf v},{\bf q}),({\bf w}_h,{\bf r}_h)) =
    a_h({\bf v}, {\bf w}_h)
    + b_h({\bf q}, {\bf w}_h)
    - b_h({\bf r}_h, {\bf v})
    + c_h({\bf q}, {\bf r}_h).
  \end{equation}
  We will bound each of the terms separately. By \cref{lem:bound_ah}
  and the Cauchy--Schwarz inequality,
  \begin{align}
    \label{eq:boundahBh}
    \envert{a_h({\bf v}, {\bf w}_h)}
    &\le C_a\tnorm{({\bf v},{\bf 0})}_{v' ,p'}\tnorm{({\bf w}_h,{\bf 0})}_{v,p}, \\
    \label{eq:boundchBh}
    \envert{c_h({\bf q},{\bf r}_h)}
    &\le c\tnorm{(0,{\bf q})}_{v,p}\tnorm{(0,{\bf r}_h)}_{v,p},
  \end{align}
  with $c > 0$ independent of~$h$. Next we consider a bound for
  $b_h({\bf r}_h, {\bf v})$. From the definition of
  $b_{h}$~\cref{eq:formB}, we note that
  \begin{equation}
    b_h({\bf r}_h, {\bf v}) =
    - \int_{\Omega}r_h \nabla \cdot v\dif x
    + \sum_{K\in\mathcal{T}}\int_{\partial K}(v-\bar{v})\cdot n
    \bar{r}_h \dif s.
  \end{equation}
  It is clear that
  \begin{equation}
    \label{eq:bound_bh_term1}
    \envert{- \int_{\Omega}r_h \nabla \cdot v\dif x}
    \le \sum_{K\in\mathcal{T}}\norm{\nabla v}_{0,K}\norm{r_h}_{0,\Omega}
    \le \tnorm{({\bf v},{\bf 0})}_{v,p}\tnorm{({\bf 0},{\bf r}_h)}_{v,p},
  \end{equation}
  and
  \begin{equation}
    \envert{\sum_{K\in\mathcal{T}}\int_{\partial K}(v - \bar{v})\cdot n
      \bar{r}_h \dif s}
    \le \del{\sum_{K\in\mathcal{T}}h_K^{-1}\norm{v-\bar{v}}_{0,\partial K}^2}^{1/2}
    \del{\sum_{K \in \mathcal{T}} h_K \norm{\bar{r}_h}_{0, \partial K}^2}^{1/2}.
  \end{equation}
  Note, however, that
  \begin{equation}
    h_K\norm{\bar{r}_h}^2_{0,\partial K}
      \le
    2h_K\norm{\bar{r}_h-r_h}^2_{0,\partial K} + c\norm{r_h}_{0,K}^2,
  \end{equation}
  where we have used the discrete trace
  inequality~\cref{eq:trace_inequality} ($c > 0$ is independent
  of~$h$). We therefore find that
  \begin{equation}
    \envert{\sum_{K\in\mathcal{T}}\int_{\partial K}(v-\bar{v})\cdot n
      \bar{r}_h\dif s}
    \le c\alpha_v^{-1/2}\tnorm{({\bf v},{\bf 0})}_{v,p}\tnorm{({\bf 0}, {\bf r}_h)}_{v,p}.
  \end{equation}
  It follows that
  \begin{equation}
    \label{eq:bound_bh_final}
    \envert{b_h({\bf r}_h, {\bf v})}
    \le
    (1 + c\alpha_v^{-1/2})\tnorm{({\bf v},{\bf 0})}_{v,p}\tnorm{({\bf 0}, {\bf r}_h)}_{v,p}.
  \end{equation}
  The final term to bound is $b_h({\bf q}, {\bf w}_h)$. Using the Cauchy--Schwarz inequality,
  \begin{equation}
    \envert{\sum_{K\in\mathcal{T}}\int_{\partial K}(w_h-\bar{w}_h)\cdot n \bar{q}\dif s}
    \le  c\alpha_v^{-1/2} \tnorm{({\bf w}_h,{\bf 0})}_{v,p}
    \tnorm{({\bf 0},{\bf q})}_{v',p'}.
  \end{equation}
  Furthermore, as in~\cref{eq:bound_bh_term1},
  $\envert{-\int_{\Omega}q \nabla \cdot w_h\dif x} \le \tnorm{({\bf
      w}_h, {\bf 0})}_{v,p} \tnorm{({\bf 0},{\bf q})}_{v,p}$.
  It follows that
  \begin{equation}
    \label{eq:bound_bh_final2}
    \envert{b_h({\bf q}, {\bf w}_h)}
    \le
    (1 + c\alpha_v^{-1/2})\tnorm{({\bf w}_h,{\bf 0})}_{v,p}
    \tnorm{({\bf 0}, {\bf q})}_{v',p'}.
  \end{equation}
  Collecting the bounds~\cref{eq:boundahBh}, \cref{eq:boundchBh},
  \cref{eq:bound_bh_final} and~\cref{eq:bound_bh_final2}, the result
  follows with $C_B = c(1+\alpha_v^{-1/2})$, where we have used
  that~$C_a = c(1 + \alpha_v^{-1/2})$.
\end{proof}

\section{Error analysis}
\label{s:errorAnalysis}

With the stability and boundedness results from the preceding section,
we can now develop convergence results for the method.

\subsection{Estimates in mesh-dependent norms}

\begin{theorem}[$\tnorm{\cdot}_{v, p}$-norm error estimate]
  \label{thm:vpnormestimate}
  Let $(u, p) \in X$ be the solution of the Stokes
  problem~\cref{eq:stokes}, and ${\bf u} = (u, \gamma(u))$ and ${\bf
    p} = (p, \gamma(p))$. Let $({\bf u}_h, {\bf p}_h) \in X_h^{\star}$
  solve~\cref{eq:compact_wf}. Then there exists a constant $C_I > 0$,
  independent of~$h$, such that
  \begin{equation}
    \tnorm{({\bf u}-{\bf u}_h, {\bf p}-{\bf p}_h)}_{v,p}
    \le
    C_I\inf_{({\bf v}_h,{\bf q}_h)\in X_h^{\star}}\tnorm{({\bf u}-{\bf v}_h,
      {\bf p} - {\bf q}_h)}_{v',p'}.
  \end{equation}
\end{theorem}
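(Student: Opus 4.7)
The plan is to run the standard Strang/Céa-type argument, combining consistency (\cref{lem:consistency}), discrete inf-sup stability (\cref{lem:stab_isfem}) and boundedness (\cref{lem:bounded_isfem}). Fix an arbitrary $({\bf v}_h,{\bf q}_h)\in X_h^{\star}$ and split the error using the triangle inequality as
\begin{equation*}
\tnorm{({\bf u}-{\bf u}_h,{\bf p}-{\bf p}_h)}_{v,p}
\le \tnorm{({\bf u}-{\bf v}_h,{\bf p}-{\bf q}_h)}_{v,p}
+ \tnorm{({\bf v}_h-{\bf u}_h,{\bf q}_h-{\bf p}_h)}_{v,p}.
\end{equation*}
The first summand is already of the form appearing in the conclusion, modulo replacing $\tnorm{\cdot}_{v,p}$ by the stronger norm $\tnorm{\cdot}_{v',p'}$, which is immediate from the definition of the two norms (the $v',p'$ norm simply adds nonnegative boundary terms to the $v,p$ norm).

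For the discrete error $({\bf e}_h,{\bf d}_h):=({\bf v}_h-{\bf u}_h,{\bf q}_h-{\bf p}_h)\in X_h^{\star}$, I apply \cref{lem:stab_isfem}, which gives
\begin{equation*}
\sigma\,\tnorm{({\bf e}_h,{\bf d}_h)}_{v,p}
\le \sup_{({\bf w}_h,{\bf r}_h)\in X_h^{\star}}
\frac{B_h(({\bf e}_h,{\bf d}_h),({\bf w}_h,{\bf r}_h))}{\tnorm{({\bf w}_h,{\bf r}_h)}_{v,p}}.
\end{equation*}
The key step is Galerkin orthogonality: by \cref{lem:consistency} applied to $({\bf u},{\bf p})$ and by \cref{eq:compact_wf} for $({\bf u}_h,{\bf p}_h)$, we have $B_h(({\bf u}-{\bf u}_h,{\bf p}-{\bf p}_h),({\bf w}_h,{\bf r}_h))=0$ for all test pairs in $X_h^{\star}$. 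Hence
\begin{equation*}
B_h(({\bf e}_h,{\bf d}_h),({\bf w}_h,{\bf r}_h))
= B_h(({\bf v}_h-{\bf u},{\bf q}_h-{\bf p}),({\bf w}_h,{\bf r}_h)),
\end{equation*}
which moves the argument from the discrete error to the interpolation error. Because $({\bf u},{\bf p})$ lies in the extended space $X^{\star}(h)$, \cref{lem:bounded_isfem} applies and bounds the right-hand side by $C_B\tnorm{({\bf v}_h-{\bf u},{\bf q}_h-{\bf p})}_{v',p'}\tnorm{({\bf w}_h,{\bf r}_h)}_{v,p}$.

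Combining these three steps yields $\tnorm{({\bf e}_h,{\bf d}_h)}_{v,p}\le (C_B/\sigma)\tnorm{({\bf u}-{\bf v}_h,{\bf p}-{\bf q}_h)}_{v',p'}$, and adding the first triangle-inequality term gives
\begin{equation*}
\tnorm{({\bf u}-{\bf u}_h,{\bf p}-{\bf p}_h)}_{v,p}
\le \bigl(1+C_B/\sigma\bigr)\tnorm{({\bf u}-{\bf v}_h,{\bf p}-{\bf q}_h)}_{v',p'}.
\end{equation*}
Taking the infimum over $({\bf v}_h,{\bf q}_h)\in X_h^{\star}$ gives the claim with $C_I:=1+C_B/\sigma$. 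There is no real obstacle here since all heavy lifting is done by the preceding lemmas; the only point requiring mild care is verifying that the boundedness lemma is legitimately invoked on the mixed argument $({\bf v}_h-{\bf u},{\bf q}_h-{\bf p})\in X^{\star}(h)$ (which is fine since $X_h^{\star}\subset X^{\star}(h)$) and that the norm on the first slot in the boundedness estimate is the stronger $v',p'$ norm, which is exactly why the infimum appears in the $v',p'$ norm on the right-hand side.
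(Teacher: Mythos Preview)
Your proof is correct and is exactly the standard Strang/C\'ea argument the paper has in mind; the paper's own proof merely states that the result is ``a direct consequence of stability (\cref{lem:stab_isfem}), consistency (\cref{lem:consistency}), and boundedness of $B_h$ (\cref{lem:bounded_isfem})'' and records the same constant $C_I = 1 + \sigma^{-1}C_B$. You have simply written out in full what the paper leaves implicit.
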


\begin{proof}
  This is a direct consequence of stability (\cref{lem:stab_isfem}),
  consistency (\cref{lem:consistency}), and boundedness of $B_{h}$
  (\cref{lem:bounded_isfem}). The theorem follows
  with~$C_I = 1 + \sigma^{-1}C_B$.
\end{proof}

For the analysis in the remainder of this section we introduce
continuous interpolants. Let $(u, p) \in \sbr{H^{k+1}(\Omega)}^d
\times H^{l+1}(\Omega)$. We denote the standard continuous
interpolant~\cite{Brenner:book} of ${\bf u} = (u, \gamma(u))$ by
$\mathcal{I}^u_h{\bf u} = (\mathcal{I}^u_h u, \bar{\mathcal{I}}^u_h
u)$, hence $\mathcal{I}^u_hu \in V_h \cap C^{0}(\bar{\Omega})$ and
$\bar{\mathcal{I}}^u_h u = \mathcal{I}^u_h u|_{\Gamma^0} \in
\bar{V}_h$.  Furthermore, let $\mathcal{I}_h^p{\bf p} =
(\mathcal{I}^p_h p, \bar{\mathcal{I}}^p_h p)$ for the pressure, where
$\mathcal{I}^p_h p$ is the continuous Scott--Zhang
interpolant~\cite{Scott:1990}, and $\bar{\mathcal{I}}^p_h p =
\mathcal{I}^p_h p|_{\Gamma^0}$. Then $\mathcal{I}^p_h p \in Q_h \cap
C^{0}(\bar{\Omega})$ and $\bar{\mathcal{I}}^p_h p = \mathcal{I}^p_h
p|_{\Gamma^0} \in \bar{Q}_h$.  The interpolation estimates read
\begin{equation}
  \label{eq:interpolationestimate}
  \norm{u - \mathcal{I}_h^u u}_{m, K} \le c h_K^{l + 1 - m} \envert{u}_{l + 1,K}, \quad
  \norm{p - \mathcal{I}_h^p p}_{m, K} \le c h_K^{l + 1 - m} \envert{p}_{l + 1,K}.
\end{equation}

\begin{lemma}[Convergence rate in the $\tnorm{\cdot}_{v,p}$-norm]
  \label{lem:convergencerateintnorm}
  Let $(u, p)\in \sbr{H^{k+1}(\Omega)}^d \times H^{l+1}(\Omega)$ solve
  the Stokes problem~\cref{eq:stokes}, and let
  $({\bf u}_h, {\bf p}_h)\in X_h^{\star}$ solve the finite element
  problem~\cref{eq:compact_wf}. Let ${\bf u} = (u, \gamma(u))$ and
  ${\bf p}=(p, \gamma(p))$.  There exists a constant $C_R > 0$,
  independent of~$h$, such that
  \begin{equation}
    \tnorm{({\bf u}-{\bf u}_h, {\bf p}-{\bf p}_h)}_{v, p}
    \le C_R\del{h^k \norm{u}_{k + 1, \Omega} + h^{l+1} \norm{p}_{l + 1, \Omega}}.
  \end{equation}
\end{lemma}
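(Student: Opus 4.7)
The plan is to apply \cref{thm:vpnormestimate} with the admissible pair $({\bf v}_h, {\bf q}_h) = (\mathcal{I}_h^u{\bf u},\mathcal{I}_h^p{\bf p}) \in X_h^{\star}$ formed from the continuous interpolants introduced just above the statement. This reduces the proof to bounding $\tnorm{({\bf u} - \mathcal{I}_h^u{\bf u},\, {\bf p} - \mathcal{I}_h^p{\bf p})}_{v', p'}$ by the right-hand side of the claim, which I would attack term by term from the definition of $\tnorm{\cdot}_{v',p'}$.

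The decisive observation is that, by continuity of the volume interpolants together with $\bar{\mathcal{I}}_h^u u = \gamma(\mathcal{I}_h^u u)$ and $\bar{\mathcal{I}}_h^p p = \gamma(\mathcal{I}_h^p p)$, one has $\gamma(u) - \bar{\mathcal{I}}_h^u u = \gamma(u - \mathcal{I}_h^u u)$ and similarly for the pressure. Consequently the two ``jump'' contributions to $\tnorm{\cdot}_{v',p'}^2$, namely $\alpha_v h_K^{-1}\norm{\bar v - v}^2_{0,\partial K}$ inside $\tnorm{{\bf v}}_v^2$ and $\alpha_p h_K \norm{\bar q - q}^2_{0,\partial K}$ in the pressure semi-norm $\envert{{\bf q}}_p^2$, vanish identically when evaluated on the interpolation error. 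Only the broken gradient of the velocity error, the volume $L^2$-norm of the pressure error, and two boundary contributions involving $\gamma(u - \mathcal{I}_h^u u)$ and $\gamma(p - \mathcal{I}_h^p p)$ survive.

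Each surviving term is then bounded using \cref{eq:interpolationestimate}. The broken gradient $\sum_K \norm{\nabla(u - \mathcal{I}_h^u u)}^2_{0,K}$ yields $h^{2k}\norm{u}^2_{k+1,\Omega}$ with $m=1$ and $l=k$, and $\norm{p - \mathcal{I}_h^p p}^2_{0,\Omega}$ yields $h^{2(l+1)}\norm{p}^2_{l+1,\Omega}$ with $m=0$. For the two boundary terms $\tfrac{h_K}{\alpha_v}\norm{\partial(u - \mathcal{I}_h^u u)/\partial n}^2_{0,\partial K}$ and $h_K\norm{\gamma(p - \mathcal{I}_h^p p)}^2_{0,\partial K}$, I would invoke the continuous trace inequality $\norm{w}^2_{0,\partial K} \le c(h_K^{-1}\norm{w}^2_{0,K} + h_K\norm{\nabla w}^2_{0,K})$ valid for $w\in H^1(K)$, applied component-wise to $\nabla(u - \mathcal{I}_h^u u)$ in the first case and to $p - \mathcal{I}_h^p p$ in the second, followed once more by \cref{eq:interpolationestimate}. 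These contribute the same $h^{2k}\norm{u}^2_{k+1,\Omega}$ and $h^{2(l+1)}\norm{p}^2_{l+1,\Omega}$ rates. Summing over $K$, taking a square root, and collecting $C_I$ from \cref{thm:vpnormestimate} together with the interpolation and trace constants and powers of $\alpha_v,\alpha_p$ into $C_R$ gives the claim.

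I do not anticipate a serious obstacle: once the jump terms have been eliminated, the remainder is standard bookkeeping. The two points meriting care are (i) using the \emph{continuous} trace inequality here rather than the discrete polynomial version \cref{eq:trace_inequality} used earlier, and (ii) verifying that the $m=2$ instance of \cref{eq:interpolationestimate} required by the velocity normal-derivative term lies within its admissible range $m \le l+1$, which is guaranteed by the standing assumption $k\ge 1$.
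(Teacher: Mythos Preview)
Your proposal is correct and follows essentially the same route as the paper: choose the continuous interpolants $(\mathcal{I}_h^u{\bf u},\mathcal{I}_h^p{\bf p})$ in \cref{thm:vpnormestimate}, observe that the velocity and pressure jump contributions to $\tnorm{\cdot}_{v',p'}$ vanish because $\bar{\mathcal{I}}_h^u u = \gamma(\mathcal{I}_h^u u)$ and $\bar{\mathcal{I}}_h^p p = \gamma(\mathcal{I}_h^p p)$, and handle the remaining volume and boundary terms with \cref{eq:interpolationestimate} together with the continuous trace inequality. The only minor slip is that $\alpha_p$ does not enter the final constant (the pressure jump term is identically zero), so $C_R$ depends on $\alpha_v$ alone.
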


\begin{proof}
  The proof is similar to that in \cite[Lemma~5.5]{Wells:2011}. Let
  $\mathcal{I}^u_h{\bf u} = (\mathcal{I}^u_h u, \Bar{\mathcal{I}}^u_h
  u)$ and $\mathcal{I}_h^p{\bf p} = (\mathcal{I}^p_h p,
  \Bar{\mathcal{I}}^p_h p)$ be the continuous interpolants of the
  velocity and pressure, respectively.  For the $\tnorm{\cdot}_{v',
    p'}$ norm, we have
  \begin{multline}
    \tnorm{({\bf u} - \mathcal{I}^u_h{\bf u} ,{\bf p}
      - \mathcal{I}^p_h{\bf p})}_{v',p'}^2
    =
    \sum_{K\in\mathcal{T}}\norm{\nabla(u - \mathcal{I}^u_h u)}^2_{0, K}
    \\
    +\sum_{K\in\mathcal{T}}\frac{\alpha_{v}}{h_K}
    \norm{(u - \bar{\mathcal{I}}^u_h u) - (u - \mathcal{I}^u_h u)}^2_{0,\partial K}
    + \sum_{K \in \mathcal{T}}\norm{p - \mathcal{I}^p_h p}^2_{0,K}
    \\
    + \sum_{K\in\mathcal{T}} \alpha_{p} h_K
    \norm{(p-\bar{\mathcal{I}}_h^pp) - (p-\mathcal{I}_h^pp)}^2_{0,\partial K}
    + \sum_{K\in\mathcal{T}} \frac{h_K}{\alpha_v}
    \norm{\frac{\partial u}{\partial n} - \frac{\partial\mathcal{I}^u_hu}{\partial n}}^2_{0,\partial K}
    \\
    + \sum_{K\in\mathcal{T}} h_K\norm{p - \bar{\mathcal{I}}_h^pp}^2_{0,\partial K}.
  \end{multline}
  Using the interpolation estimate~\cref{eq:interpolationestimate},
  \begin{equation}
    \begin{split}
      \norm{\nabla(u-\mathcal{I}^u_h u)}^2_{0,K} &\le ch^{2k} \envert{u}^2_{k+1,K}
      \\
      \frac{\alpha_{v}}{h_K}
      \norm{(u-\bar{\mathcal{I}}^u_h u) - (u-\mathcal{I}^u_h u)}^2_{0,\partial K}
      &= 0
      \\
      \frac{h_K}{\alpha_v}\norm{\frac{\partial u}{\partial n} - \frac{\partial\mathcal{I}^u_hu}{\partial n}}^2_{0,\partial K}
      &\le c\alpha_v^{-1}\del{\envert{u-\mathcal{I}^u_hu}^2_{1,K}
      + h_K^2\envert{u-\mathcal{I}^u_hu}^2_{2,K}}
      \\
      &\le c\alpha_v^{-1}h^{2k} \envert{u}^2_{k+1,K}
      \\
      \norm{p-\mathcal{I}^p_hp}^2_{0,K} &\le ch_K^{2(l+1)} \envert{p}^2_{l+1,K}
      \\
      \alpha_ph_K\norm{(p-\bar{\mathcal{I}}_h^p p) - (p - \mathcal{I}_h^p p)}^2_{0,\partial K} &= 0
      \\
      h_K\norm{p-\bar{\mathcal{I}}_h^pp}^2_{0,\partial K}
      &=
      h_K^2\norm{p-\mathcal{I}_h^pp}^2_{0,\partial K}
      \\
      &\le
      ch_K \del{\norm{p-\mathcal{I}_h^pp}_{0,K}^2
      + h_K^2 \envert{p - \mathcal{I}_h^p p}^2_{1,K}}
      \\
      &\le ch_K^{2l+3} \envert{p}^2_{l,K}.
    \end{split}
  \end{equation}
  It follows that
  \begin{equation}
    \label{eq:uIupIp_equiv}
    \tnorm{({\bf u}-\mathcal{I}^u_h{\bf u},
      {\bf p} - \mathcal{I}^p_h{\bf p})}_{v',p'}
    \le
    c(1+\alpha_v^{-1})^{1/2} \del{h^k \envert{u}_{k+1,\Omega}
    + h^{l + 1} \envert{p}_{l + 1, \Omega}},
  \end{equation}
  and by application of \cref{thm:vpnormestimate} the result follows
  with $C_R = cC_I(1 + \alpha_v^{-1})^{1/2}$.
\end{proof}

\subsection{Estimates in the $L^{2}$ norm}

To find an error estimate in the $L^2$-norm for the velocity, we will
rely on the following regularity assumption. If $f \in
[L^2(\Omega)]^{d}$ for~\cref{eq:stokes} and $(u, p)$ solves Stokes
problem~\cref{eq:stokes}, we have
\begin{equation}
  \label{eq:regularity}
  \norm{u}_{2, \Omega} + \norm{p}_{1, \Omega} \le c_r\norm{f}_{0, \Omega}.
\end{equation}
where $c_{r}$ is a constant. Satisfaction of this regularity estimate
places some restrictions on the shape of the domain~$\Omega$.

\begin{theorem}[Velocity error estimate in the $L^2$-norm]
  \label{thm:L2velocityError}
  Let $(u, p) \in X$ solve the Stokes problem~\cref{eq:stokes}, and
  ${\bf u} = (u, \gamma(u))$ and ${\bf p} = (p, \gamma(p))$, and let
  $({\bf u}_h, {\bf p}_h) \in X_h^{\star}$ be the solution
  to~\cref{eq:compact_wf}.  Subject to the regularity condition
  in~\cref{eq:regularity}, there exists a constant $C_V > 0$,
  independent of~$h$, such that
  \begin{equation}
    \norm{u-u_h}_{0,\Omega}
      \le C_V h \tnorm{({\bf u} - {\bf u}_h, {\bf p} - {\bf p}_h)}_{v^{\prime}, p^{\prime}}.
  \end{equation}
\end{theorem}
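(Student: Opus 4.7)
The plan is an Aubin--Nitsche duality argument. I would first introduce the dual Stokes problem: find $(\phi,\psi)\in X$ such that
\begin{equation*}
-\Delta\phi+\nabla\psi = u-u_h,\quad \nabla\cdot\phi=0 \text{ in } \Omega,\quad \phi=0 \text{ on } \Gamma,\quad \int_{\Omega}\psi\,\dif x=0,
\end{equation*}
so that by the regularity assumption~\cref{eq:regularity}, $\norm{\phi}_{2,\Omega}+\norm{\psi}_{1,\Omega}\le c_r\norm{u-u_h}_{0,\Omega}$. Set $\boldsymbol{\phi}=(\phi,\gamma(\phi))$ and $\boldsymbol{\psi}=(-\psi,-\gamma(\psi))$; the sign flip on the pressure component is dictated by the skew sign of $-b_h({\bf q},{\bf u})$ in the definition of $B_h$~\cref{eq:bilinearformStokes}.

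The pivotal step is the adjoint-consistency identity
\begin{equation*}
\norm{u-u_h}^2_{0,\Omega} = B_h\bigl(({\bf u}-{\bf u}_h,\,{\bf p}-{\bf p}_h),\,(\boldsymbol{\phi},\boldsymbol{\psi})\bigr),
\end{equation*}
which I would verify by mimicking the element-wise integration-by-parts calculation of~\cref{lem:consistency}. The terms $b_h({\bf p}-{\bf p}_h,\boldsymbol{\phi})$ and $c_h({\bf p}-{\bf p}_h,\boldsymbol{\psi})$ drop out because $\nabla\cdot\phi=0$ and because $\phi=\gamma(\phi)$, $\psi=\gamma(\psi)$ on cell boundaries; the facet contributions left over from $a_h$ and $b_h(\boldsymbol{\psi},{\bf u}-{\bf u}_h)$ cancel on interior facets (the single-valued traces of $\nabla\phi\,n$ and $\psi n$ are tested against the single-valued $\bar{u}-\bar{u}_h$, with opposite normals from the two sides) and vanish on $\Gamma$ because $\bar{u}-\bar{u}_h=0$ there.

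The remaining steps are standard. Galerkin orthogonality, obtained by subtracting the discrete equation~\cref{eq:compact_wf} from consistency~\cref{lem:consistency}, gives $B_h(({\bf u}-{\bf u}_h,{\bf p}-{\bf p}_h),({\bf v}_h,{\bf q}_h))=0$ for every $({\bf v}_h,{\bf q}_h)\in X_h^{\star}$. Taking $({\bf v}_h,{\bf q}_h)=(\mathcal{I}_h^u\boldsymbol{\phi},-\mathcal{I}_h^p(\psi,\gamma(\psi)))$ with the continuous interpolants introduced before~\cref{lem:convergencerateintnorm} (the facet component of $\mathcal{I}_h^u\boldsymbol{\phi}$ vanishes on $\Gamma$ since $\phi=0$ there, so the test function indeed lies in $X_h^{\star}$) reduces the identity to
\begin{equation*}
\norm{u-u_h}^2_{0,\Omega} = B_h\bigl(({\bf u}-{\bf u}_h,{\bf p}-{\bf p}_h),\,(\boldsymbol{\phi}-\mathcal{I}_h^u\boldsymbol{\phi},\,\boldsymbol{\psi}+\mathcal{I}_h^p(\psi,\gamma(\psi)))\bigr).
\end{equation*}
An extension of the boundedness~\cref{lem:bounded_isfem} to a non-discrete second argument (the boundary contributions $\partial v/\partial n$ and $h_K^{1/2}\norm{\bar{q}}_{0,\partial K}$ are then controlled directly by the $\tnorm{\cdot}_{v',p'}$ norm in place of the discrete trace inequality used in~\cref{lem:bound_ah}), combined with the interpolation estimates worked out in~\cref{lem:convergencerateintnorm} and the regularity bound on the dual, yields
\begin{equation*}
\tnorm{(\boldsymbol{\phi}-\mathcal{I}_h^u\boldsymbol{\phi},\,\boldsymbol{\psi}+\mathcal{I}_h^p(\psi,\gamma(\psi)))}_{v',p'}\le ch\bigl(\norm{\phi}_{2,\Omega}+\norm{\psi}_{1,\Omega}\bigr)\le cc_r h\norm{u-u_h}_{0,\Omega}.
\end{equation*}
Dividing through by $\norm{u-u_h}_{0,\Omega}$ gives the claim. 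The principal technical hurdle is the adjoint-consistency identity of the second paragraph: the sign choice $\boldsymbol{\psi}=-(\psi,\gamma(\psi))$ is precisely what is needed for the facet contributions from $a_h$ and $b_h(\boldsymbol{\psi},\cdot)$ to cancel in the right direction, after which everything reduces to a routine combination of Galerkin orthogonality, boundedness, and interpolation.
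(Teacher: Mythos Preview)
Your proposal is correct and follows the same Aubin--Nitsche duality route as the paper: dual Stokes problem, adjoint-consistency identity, Galerkin orthogonality to insert interpolants, boundedness of $B_h$ on $X^{\star}(h)\times X^{\star}(h)$, interpolation estimate~\cref{eq:uIupIp_equiv}, and the regularity bound. Your explicit sign flip $\boldsymbol{\psi}=-(\psi,\gamma(\psi))$ is in fact a point the paper handles carelessly---the published proof inserts $(\xi,\gamma(\xi))$ with a plus sign into $B_h$ and still claims the identity, which is off by a sign in the $b_h$ term; your version is the correct bookkeeping.
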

\begin{proof}
  Let $(\zeta, \xi) \in X$ solve the Stokes problem~\cref{eq:stokes}
  with $f = (u - u_h) \in [L^2(\Omega)]^d$. From the regularity
  assumption,
  \begin{equation}
    \label{eq:usingregularity}
    \norm{\zeta}_{2,\Omega} + \norm{\xi}_{1,\Omega}
    \le c_r\norm{u - u_h}_{0,\Omega}.
  \end{equation}
  From the definition of $a_h$~\cref{eq:formA},
  \begin{equation}
    \label{eq:ah_u_zeta}
    \begin{split}
      a_h({\bf u} - {\bf u}_h, (\zeta, \gamma(\zeta)))
      &=
      \int_{\Omega}\nabla (u-u_h) : \nabla \zeta\dif x
      - \sum_{K\in\mathcal{T}}\int_{\partial K}(\bar{u}_h-u_h)\cdot \frac{\partial \zeta}{\partial n}\dif s
      \\
      &=
      -\int_{\Omega}(u-u_h) \cdot \Delta \zeta\dif x
      + \sum_{K\in\mathcal{T}}\int_{\partial K}(u - \bar{u}_h)\cdot \frac{\partial \zeta}{\partial n}\dif s
      \\
     &=
      -\int_{\Omega} (u-u_h) \cdot \Delta \zeta\dif x,
    \end{split}
  \end{equation}
  where the second equality is due to integration by parts and the
  third is due to the single valuedness of $u$, $\bar{u}_h$ and
  $\nabla\zeta n$ across element boundaries and $u = \bar{u}_h = 0$
  on~$\Gamma$. By definition of $b_h$ in~\cref{eq:formB}, we also
  find that
  \begin{equation}
    \label{eq:bh_p_xi}
    \begin{split}
      b_h((\xi, \gamma(\xi)), {\bf u}-{\bf u}_h)
      &=
      - \int_{\Omega}\xi \nabla \cdot (u-u_h)\dif x
      + \sum_{K\in\mathcal{T}}\int_{\partial K}(\bar{u}_h-u_h)\cdot n \xi\dif s
      \\
      &=
      \int_{\Omega}\nabla \xi \cdot (u-u_h)\dif x
      - \sum_{K\in\mathcal{T}}\int_{\partial K}(u - \bar{u}_h)\cdot n \xi\dif s
      \\
      &=
      \int_{\Omega}\nabla \xi \cdot (u-u_h)\dif x,
    \end{split}
  \end{equation}
  where the second equality is due to integration by parts and the
  third is due to the single-valuedness of $u$, $\bar{u}_h$ and $\xi$
  across element boundaries and $u = \bar{u}_h = 0$
  on~$\Gamma$. Combining~\cref{eq:ah_u_zeta} and~\cref{eq:bh_p_xi}
  and using that $-\Delta\zeta + \nabla\xi = (u-u_h)$,
  \begin{multline}
    \norm{u-u_h}^2_{0,\Omega}=\int_{\Omega}(u-u_h)^2\dif x
    =\int_{\Omega}(-\Delta\zeta+\nabla\xi)\cdot(u-u_h)\dif x
    \\
    = a_h({\bf u}-{\bf u}_h, (\zeta, \gamma(\zeta))) + b_h((\xi, \gamma(\xi)), {\bf
      u}-{\bf u}_h).
  \end{multline}
  Furthermore, again by definition of~$b_h$~\cref{eq:formB},
  \begin{equation}
    \begin{split}
      b_h({\bf p}-{\bf p}_h, (\zeta,\zeta)) = &
      - \int_{\Omega}(p-p_h) \nabla \cdot \zeta\dif x = 0,
    \end{split}
  \end{equation}
  due to $\nabla \cdot \zeta = 0$. From the definition of
  $c_h$~\cref{eq:formC}, it is clear that $c_h({\bf p} - {\bf
    p}_h,(\zeta,\zeta))=0$. It therefore follows that
  \begin{equation}
    \norm{u-u_h}^2_{0,\Omega}
    = B_h(({\bf u}-{\bf u}_h,{\bf p}-{\bf p}_h), ((\zeta,\gamma(\zeta)),(\xi, \gamma(\xi)))).
  \end{equation}
  Using consistency (\cref{lem:consistency}), boundedness of
  $B_h$ on $X^{\star}(h) \times
  X^{\star}(h)$\footnote{\cref{lem:bounded_isfem} provides
    boundedness of $B_h$ on $X^{\star}(h) \times X_h^{\star}$ but the
    proof for boundedness on $X^{\star}(h) \times X^{\star}(h)$ is
    similar with both norms on the right-hand side of
    \cref{eq:bounded_isfem} being $\tnorm{\cdot}_{v^{\prime}, p^{\prime}}$}, we find
  \begin{equation}
    \label{eq:bounduuhalmost}
    \begin{split}
      \norm{u-u_h}^2_{0,\Omega}
      &=
      B_h(({\bf u}-{\bf u}_h,{\bf p}-{\bf p}_h),((\zeta-\mathcal{I}_h^u\zeta,\zeta-\bar{\mathcal{I}}_h^u\zeta), (\xi-\mathcal{I}_h^p\xi,\xi-\bar{\mathcal{I}}_h^p\xi)))
      \\
      &\le C_B\tnorm{({\bf u}-{\bf u}_h,{\bf p}-{\bf p}_h)}_{v',p'}
      \tnorm{((\zeta-\mathcal{I}_h^u\zeta,\zeta-\bar{\mathcal{I}}_h^u\zeta),(\xi-\mathcal{I}_h^p\xi,\xi-\bar{\mathcal{I}}_h^p\xi))}_{v',p'}.
    \end{split}
  \end{equation}
  Using the interpolation estimate~\cref{eq:uIupIp_equiv}, we note
  that
  \begin{equation}
    \label{eq:usinginterpolationestimate}
    \tnorm{((\zeta-\mathcal{I}_h^u \zeta, \zeta - \bar{\mathcal{I}}_h^u\zeta),
      (\xi - \mathcal{I}_h^p \xi, \xi - \bar{\mathcal{I}}_h^p\xi))}_{v',p'}
    \le
    c(1 + \alpha_v^{-1})^{1/2}h \del{\envert{\zeta}_{2, \Omega}
      + \envert{\xi}_{1,\Omega}}.
  \end{equation}
  It follows from~\cref{eq:bounduuhalmost}
  and~\cref{eq:usinginterpolationestimate},
  \begin{equation}
    \begin{split}
      \norm{u-u_h}^2_{0,\Omega}
      \le &c(1+\alpha_v^{-1})^{1/2}C_B
      h\tnorm{({\bf u}-{\bf u}_h,{\bf p} - {\bf p}_h)}_{v',p'}
      \del{\norm{\zeta}_{2,\Omega} + \norm{\xi}_{1,\Omega}}.
    \end{split}
  \end{equation}
  Using the regularity estimate~\cref{eq:usingregularity}, we obtain
  \begin{equation}
    \norm{u-u_h}^2_{0,\Omega} \le c(1+\alpha_v^{-1})^{1/2}C_Bc_r
    h\tnorm{({\bf u}-{\bf u}_h,{\bf p}-{\bf p}_h)}_{v',p'}
    \norm{u-u_h}_{0,\Omega},
  \end{equation}
  from which the theorem follows with
  $C_V=c(1+\alpha_v^{-1})^{1/2}C_Bc_r$.
\end{proof}

We can now obtain a convergence rate for the velocity error in the
$L^2$-norm.
\begin{lemma}[Convergence rate for the velocity in the $L^2$-norm]
  \label{lem:l2convrate_v}
  Let $(u, p) \in \sbr{H^{k+1}(\Omega)}^d \times H^{l + 1}(\Omega)$ be
  the solution of the Stokes problem~\cref{eq:stokes}, and ${\bf u} =
  (u, \gamma(u))$ and ${\bf p} = (p, \gamma(p))$, and let $({\bf u}_h,
  {\bf p}_h)\in X_h^{\star}$ solve~\cref{eq:compact_wf}. Subject to
  the regularity condition~\cref{eq:regularity}, there exists a
  constant $C_C > 0$, independent of~$h$, such that
  \begin{equation}
    \norm{u-u_h}_{0,\Omega}
    \le C_C \del{h^{k + 1} \norm{u}_{k + 1,\Omega}
      + h^{l + 2} \norm{p}_{l+1, \Omega}}.
  \end{equation}
\end{lemma}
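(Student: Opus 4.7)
The plan is to chain \cref{thm:L2velocityError} with \cref{lem:convergencerateintnorm}. \Cref{thm:L2velocityError} provides $\norm{u-u_h}_{0,\Omega} \le C_V h \tnorm{({\bf u}-{\bf u}_h, {\bf p}-{\bf p}_h)}_{v',p'}$, so it suffices to show that $\tnorm{({\bf u}-{\bf u}_h, {\bf p}-{\bf p}_h)}_{v',p'} \le c(h^k \norm{u}_{k+1,\Omega} + h^{l+1}\norm{p}_{l+1,\Omega})$; multiplying by $h$ then delivers the claimed rates $h^{k+1}$ and $h^{l+2}$. Note that \cref{lem:convergencerateintnorm} gives the desired rates only in the weaker $\tnorm{\cdot}_{v,p}$-norm, so an upgrade to the stronger $\tnorm{\cdot}_{v',p'}$-norm on the error is required.

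To carry this out, I would insert the continuous interpolants $(\mathcal{I}_h^u {\bf u}, \mathcal{I}_h^p {\bf p})$ and apply the triangle inequality. The interpolation contribution $\tnorm{({\bf u}-\mathcal{I}_h^u {\bf u}, {\bf p}-\mathcal{I}_h^p {\bf p})}_{v',p'}$ has already been computed in \cref{eq:uIupIp_equiv} with exactly the desired order. For the discrete remainder $(\mathcal{I}_h^u {\bf u} - {\bf u}_h, \mathcal{I}_h^p {\bf p} - {\bf p}_h) \in X_h^{\star}$, I would establish an equivalence $\tnorm{\cdot}_{v',p'} \le C \tnorm{\cdot}_{v,p}$ on the discrete space: the velocity part is exactly \cref{eq:equivalentNorms_v_vprime}, and the extra pressure-facet term $\sum_K h_K \norm{\bar{q}_h}^2_{0,\partial K}$ is handled by writing $\bar{q}_h = q_h + (\bar{q}_h - q_h)$, bounding the first summand by $c\norm{q_h}_{0,\Omega}^2$ via the discrete trace inequality \cref{eq:trace_inequality}, and recognising the second as $\alpha_p^{-1}\envert{{\bf q}_h}^2_p$. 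A further triangle inequality around the interpolants reduces the discrete remainder to an interpolation error plus $\tnorm{({\bf u}-{\bf u}_h, {\bf p}-{\bf p}_h)}_{v,p}$, the latter bounded by \cref{lem:convergencerateintnorm} and the former by the interpolation estimates.

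The main technical subtlety is the mixed-order case $\alpha_p = 0$, where $\alpha_p^{-1}\envert{{\bf q}_h}^2_p$ is no longer a legitimate bound since the pressure semi-norm vanishes and in particular no longer controls $\bar{q}_h - q_h$. To handle this I would invoke the equivalent $H(\mathrm{div})$-conforming formulation \cref{eq:bdm_compact_wf}, in which $\bar{p}_h$ appears as a Lagrange multiplier enforcing normal continuity of $u_h$; a direct estimate for $\sum_K h_K \norm{\bar{p}_h - \bar{\mathcal{I}}_h^p p}^2_{0,\partial K}$ of the correct order can then be extracted from the momentum equation using the BDM interpolation properties in \cref{lem:BDMprojection}. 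Putting all pieces together yields $\tnorm{({\bf u}-{\bf u}_h, {\bf p}-{\bf p}_h)}_{v',p'} \le c(h^k \norm{u}_{k+1,\Omega} + h^{l+1} \norm{p}_{l+1,\Omega})$, and applying \cref{thm:L2velocityError} concludes the proof with $C_C = C_V \cdot c$ absorbing the $\alpha_v$ (and, in the equal-order case, $\alpha_p$) dependencies.
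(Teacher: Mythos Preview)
Your approach is essentially the paper's: both establish that $\tnorm{\cdot}_{v',p'}$ and $\tnorm{\cdot}_{v,p}$ are equivalent on $X_h^\star$ (via the discrete trace inequality for the $q_h$ part and the $\bar{q}_h-q_h$ splitting for the facet pressure term), then combine this with \cref{thm:L2velocityError} and the interpolation estimate \cref{eq:uIupIp_equiv}. The paper packages the equivalence as a re-derived inf-sup/C\'ea estimate directly in the $\tnorm{\cdot}_{v',p'}$-norm rather than your triangle-inequality splitting around the interpolants plus \cref{lem:convergencerateintnorm}, but the substance is identical; you are in fact more careful than the paper about the $\alpha_p=0$ case, which the paper's displayed norm-equivalence step glosses over.
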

\begin{proof}
  We first show that $\tnorm{({\bf v}_h, {\bf q}_h)}_{v, p}$ and
  $\tnorm{({\bf v}_h, {\bf q}_h)}_{v^{\prime}, p^{\prime}}$ are equivalent norms on
  $X_h^{\star}$,~i.e.,
  \begin{equation}
    \tnorm{({\bf v}_h,{\bf q}_h)}^2_{v,p}
    \le \tnorm{({\bf v}_h, {\bf q}_h)}^2_{v^{\prime}, p^{\prime}}
    \le c \tnorm{({\bf v}_h,{\bf q}_h)}^2_{v,p}.
  \end{equation}
  The first inequality is trivial. The second inequality follows from
  \begin{equation}
    \sum_{K\in\mathcal{T}}h_K\norm{\bar{q}_h}^2_{0,\partial K} \le
    \sum_{K\in\mathcal{T}}h_K \del{\norm{\bar{q}_h - q_h}^2_{0,\partial K} +
      \norm{q_h}_{0,\partial K}^2}
    \le \envert{{\bf q}_h}_p^2 + C_t^2\sum_{K\in\mathcal{T}}\norm{q_h}^2_{0, K}
  \end{equation}
  where the last inequality above is due to the discrete trace
  inequality~\cref{eq:trace_inequality}. From the equivalence of
  $\tnorm{({\bf v}_h, {\bf q}_h)}_{v,p}$ and $\tnorm{({\bf v}_h, {\bf
      q}_h)}_{v^{\prime}, p^{\prime}}$ on~$X_h^{\star}$, we find by
  \cref{lem:stab_isfem} that for all~$({\bf v}_h, {\bf q}_h) \in
  X_h^{\star}$
  \begin{equation}
    \label{eq:infsupstability-star}
    c \sigma \tnorm{({\bf v}_h,{\bf q}_h)}_{v^{\prime}, p^{\prime}} \le
    \sup_{({\bf w}_h,{\bf r}_h)\in X_h^{\star}}
    \frac{B_h(({\bf v}_h,{\bf q}_h),({\bf w}_h,{\bf r}_h))}
    {\tnorm{({\bf w}_h,{\bf r}_h)}_{v^{\prime}, p^{\prime}}}.
  \end{equation}
  By \cref{lem:bounded_isfem} we have boundedness of $B_h$ on
  $X^{\star}(h) \times X_h^{\star}$ with respect to the
  $\tnorm{(\cdot, \cdot)}_{v^{\prime}, p^{\prime}}$ and
  $\tnorm{(\cdot, \cdot)}_{v,p}$ norms. The bilinear form $B_h$,
  however, is also bounded on $X^{\star}(h) \times X^{\star}(h)$, but
  with respect to only the $\tnorm{(\cdot, \cdot)}_{v^{\prime},
    p^{\prime}}$ norm:
  \begin{equation}
    \label{eq:bounded_isfem_staronly}
    B_h(({\bf v},{\bf q}),({\bf w},{\bf r})) \le
    C_B \tnorm{({\bf v}, {\bf q})}_{v^{\prime}, p^{\prime}}\tnorm{({\bf w}, {\bf r})}_{v^{\prime}, p^{\prime}}.
  \end{equation}
  Using~\cref{eq:infsupstability-star} and consistency
  (\cref{lem:consistency}), we find that
  \begin{equation}
    \begin{split}
      c\sigma \tnorm{({\bf u}_h-{\bf v}_h, {\bf p}_h-{\bf q}_h)}_{v' ,p'}
      &\le
      \sup_{({\bf w}_h,{\bf r}_h)\in X_h^{\star}}
      \frac{B_h(({\bf u}_h-{\bf v}_h,{\bf p}_h-{\bf q}_h),({\bf w}_h,{\bf r}_h))}
           {\tnorm{({\bf w}_h,{\bf r}_h)}_{v^{\prime}, p^{\prime}}}
           \\
      &=
      \sup_{({\bf w}_h,{\bf r}_h)\in X_h^{\star}}
      \frac{B_h(({\bf u}-{\bf v}_h,{\bf p}-{\bf q}_h),({\bf w}_h,{\bf r}_h))}
      {\tnorm{({\bf w}_h,{\bf r}_h)}_{v^{\prime}, p^{\prime}}}.
    \end{split}
  \end{equation}
  Boundedness of $B_h$~\cref{eq:bounded_isfem_staronly} results in
  \begin{equation}
    \frac{c\sigma}{C_B} \tnorm{({\bf u}_h-{\bf v}_h, {\bf p}_h-{\bf q}_h)}_{v^{\prime}, p^{\prime}}
    \le
    \tnorm{({\bf u}-{\bf v}_h,{\bf p}-{\bf q}_h)}_{v^{\prime}, p^{\prime}},
  \end{equation}
  and by the triangle inequality (similar to
  \cref{thm:vpnormestimate}), we find
  \begin{equation}
    \label{eq:L2vnormestimate}
    \tnorm{({\bf u}-{\bf u}_h, {\bf p}-{\bf p}_h)}_{v^{\prime}, p^{\prime}} \le
    \del{1 + \frac{cC_B}{\sigma}}
    \inf_{({\bf v}_h,{\bf q}_h)\in X_h^{\star}}
    \tnorm{({\bf u}-{\bf v}_h, {\bf p}-{\bf q}_h)}_{v^{\prime}, p^{\prime}}.
  \end{equation}
  By \cref{thm:L2velocityError} and~\cref{eq:L2vnormestimate} we
  therefore find
  \begin{equation}
    \begin{split}
      \norm{u-u_h}_{0,\Omega} &\le C_V h \tnorm{({\bf u}-{\bf u}_h,
        {\bf p} - {\bf p}_h)}_{v^{\prime}, p^{\prime}}
      \\
      &\le
      C_V \del{1 + \frac{cC_B}{\sigma}} h
      \inf_{({\bf v}_h, {\bf q}_h) \in X_h^{\star}}\tnorm{({\bf u} - {\bf v}_h,
        {\bf p} - {\bf q}_h)}_{v^{\prime}, p^{\prime}}.
    \end{split}
  \end{equation}
  In particular
  \begin{equation}
    \norm{u-u_h}_{0,\Omega}
    \le
    C_V \del{1 + \frac{cC_B}{\sigma}}
    h \tnorm{({\bf u} - \mathcal{I}_h^u{\bf u},
      {\bf p} - \mathcal{I}_h^p{\bf p})}_{v^{\prime}, p^{\prime}}.
  \end{equation}
  Using the interpolation estimate~\cref{eq:uIupIp_equiv}, we obtain
  \begin{equation}
    \norm{u - u_h}_{0, \Omega}
    \le
    c(1+\alpha_v^{-1})^{1/2}C_V \del{1 + \frac{cC_B}{\sigma}}
    \del{h^{k + 1} \envert{u}_{k + 1, \Omega} + h^{l + 2} \envert{p}_{l + 1, \Omega}},
  \end{equation}
  and the result follows with $C_C = c(1 + \alpha_v^{-1})^{1/2} C_V(1
  + cC_B \sigma^{-1})$.
\end{proof}

We end this section with the convergence rate for the pressure in
the~$L^2$-norm.

\begin{lemma}[Convergence rate for the pressure in the $L^2$-norm]
  \label{lem:l2convrate_p}
  Let $(u, p)\in \sbr{H^{k+1}(\Omega)}^d \times H^{l+1}(\Omega)$ solve
  the Stokes problem~\cref{eq:stokes}, and ${\bf u} = (u, \gamma(u))$
  and ${\bf p} = (p, \gamma(p))$, and let
  $({\bf u}_h, {\bf p}_h) \in X_h^{\star}$
  solve~\cref{eq:compact_wf}. Let $C_R$ be the constant defined in
  \cref{lem:convergencerateintnorm}. Subject to the regularity
  condition in~\cref{eq:regularity}, the following inequality holds,
  \begin{equation}
    \norm{p-p_h}_{0,\Omega}
    \le C_R \del{h^k \norm{u}_{k + 1,\Omega} + h^{l + 1}\norm{p}_{l + 1,\Omega}}.
  \end{equation}
\end{lemma}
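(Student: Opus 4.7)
The plan is extremely short: extract the pressure component from the $\tnorm{\cdot}_{v,p}$-norm estimate already proved in \cref{lem:convergencerateintnorm}. Indeed, from the definition
\begin{equation*}
  \tnorm{({\bf v}_h,{\bf q}_h)}_{v,p}^2 := \tnorm{{\bf v}_h}_v^2
  + \norm{q_h}^2_{0,\Omega} + \envert{{\bf q}_h}^2_p,
\end{equation*}
each summand on the right is nonnegative, so in particular $\norm{q_h}_{0,\Omega} \le \tnorm{({\bf v}_h,{\bf q}_h)}_{v,p}$ for every $({\bf v}_h,{\bf q}_h)\in X^{\star}(h)$.

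Applying this pointwise bound to the error pair $({\bf u}-{\bf u}_h,\,{\bf p}-{\bf p}_h)$ and then invoking \cref{lem:convergencerateintnorm} directly yields
\begin{equation*}
  \norm{p-p_h}_{0,\Omega}
  \le \tnorm{({\bf u}-{\bf u}_h,\,{\bf p}-{\bf p}_h)}_{v,p}
  \le C_R\del{h^k \norm{u}_{k+1,\Omega} + h^{l+1}\norm{p}_{l+1,\Omega}},
\end{equation*}
which is the claimed estimate with the same constant~$C_R$.

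There is no real obstacle here: the proof is a one-line consequence of \cref{lem:convergencerateintnorm}. Note in particular that, unlike the velocity $L^2$-estimate in \cref{lem:l2convrate_v}, no duality/Aubin--Nitsche argument is required, so the elliptic regularity hypothesis~\cref{eq:regularity} listed in the statement is not actually invoked in the proof; it is inherited only because the lemma is positioned in the same section and the same assumption was carried forward from the preceding results.
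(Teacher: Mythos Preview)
Your proof is correct and matches the paper's argument exactly: both simply observe that $\norm{p-p_h}_{0,\Omega}$ is dominated by $\tnorm{({\bf u}-{\bf u}_h,{\bf p}-{\bf p}_h)}_{v,p}$ and then invoke \cref{lem:convergencerateintnorm}. Your remark that the regularity assumption~\cref{eq:regularity} is not actually used is also accurate.
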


\begin{proof}
  Since the $L^2$-norm of the pressure is part of the norm $\tnorm{(\cdot,
  \cdot)}_{v, p}$ we note that
  \begin{equation}
    \norm{p - p_h}_{0, \Omega}
    \le \tnorm{({\bf u} - {\bf u}_h, {\bf p} - {\bf p}_h)}_{v, p}
    \le C_R \del{h^k \norm{u}_{k + 1,\Omega} + h^{l + 1} \norm{p}_{l + 1,\Omega}},
  \end{equation}
  where the last inequality is due to
  \cref{lem:convergencerateintnorm}.
\end{proof}

\Cref{lem:l2convrate_v,lem:l2convrate_p} show that if
$\sbr{P_k}^d$--$P_k$ or $\sbr{P_k}^d$--$P_{k-1}$ elements are used for
the velocity-pressure approximation, then
\begin{equation}
  \norm{u-u_h}_{0,\Omega} \le c h^{k+1} \quad \text{and} \quad
  \norm{p-p_h}_{0,\Omega} \le c h^{k}.
\end{equation}
For the $\sbr{P_k}^d$--$P_k$ element we therefore find a sub-optimal
error estimate, while an optimal estimate is obtained for the
$\sbr{P_k}^d$--$P_{k-1}$ element.

The \emph{a priori} error estimates are consistent with the
experimentally observed convergence rates in~\cite{Labeur:2012} for
the case of $C^{0}$-conforming facet functions.

\section{Conclusions}
\label{sec:conclusions}

We have analysed a hydridized DG/interface stabilized method for the
Stokes equations, and proven inf-sup stability and optimal convergence
rates.  The developed convergence estimates are consistent with the
experimentally observed convergence rates presented in earlier
publications. The method is particularly appealing as it can be
constructed to have the same number of global degrees of freedom and
the same global matrix operator structure as a conforming formulation,
yet it is locally conservative. Moreover, on simplices the local
velocity field can be point-wise divergence-free. These properties
make the method an excellent candidate for coupling to transport
equations. When extended to the incompressible Navier--Stokes
equations, the structure of the method makes the incorporation of
standard DG-type stabilization of the advective terms straightforward.

\subsubsection*{Acknowledgements}

SR gratefully acknowledges support from the Natural Sciences and
Engineering Research Council of Canada through the Discovery Grant
program (RGPIN-05606-2015) and the Discovery Accelerator Supplement
(RGPAS-478018-2015).

\bibliographystyle{siamplain}
\bibliography{references}
\end{document}